\documentclass{amsart}
\usepackage{amsmath,amsfonts,amsthm,amscd,amssymb,pstricks,graphicx,subfig,sidecap}

\newtheorem{thm}{Theorem}[section]

\newtheorem{cor}[thm]{Corollary}
\newtheorem{lem}[thm]{Lemma}

\newtheorem{defn}[thm]{Definition}
\newtheorem{question}[thm]{Question}

\begin{document}

\author{Brendan Murphy and Jonathan Pakianathan }
\title{Kakeya configurations in Lie groups and Homogeneous spaces.}

\begin{abstract}
In this paper, we study continuous Kakeya line and needle configurations, of both the oriented and unoriented varieties, in connected Lie groups 
and some associated homogenous spaces. These are the analogs of Kakeya line (needle) sets (subsets of $\mathbb{R}^n$ where it is 
possible to turn a line (respectively an interval of unit length) through all directions {\bf continuously, without repeating a "direction"}.) 
We show under some general assumptions 
that any such continuous Kakeya line configuration set in a connected Lie group must contain an open neighborhood of the identity, 
and hence must have positive Haar measure. In connected nilpotent Lie groups $G$, the only subspace 
of $G$ that contains such an unoriented line configuration is shown to be $G$ itself.  Finally some similar questions in homogeneous spaces are addressed.

These questions were motivated by work of Z. Dvir in the finite field setting.

\noindent
{\it Keywords: } Kakeya needle sets, Lie groups, homogeneous spaces, fiber bundles.

\noindent
2010 {\it Mathematics Subject Classification.}
Primary: 55M99, 22E25;
Secondary: 42B99, 51A15.
\end{abstract}

\maketitle

\tableofcontents

\section{Introduction}

The purpose of this note is to study Kakeya like configurations in the setting of connected Lie groups. We will only be looking at continuous configurations and so this paper has little to say about the famous Kakeya conjecture (described below) but instead focuses on Kakeya needle-like configurations in general settings. 

Kakeya posed the Kakeya needle question in $\mathbb{R}^2$: What is the smallest area of a subset of $\mathbb{R}^2$ for which it is possible to turn a 
needle (interval of length 1) around {\bf continuously} while remaining entirely in the subset. Such a subset is called a Kakeya needle set. When the subset is required to be convex, Pal (\cite{Pa}) showed the answer was given by the equilateral triangle of base height 1 (area $\frac{1}{\sqrt{3}}$). 
Similar minimizers of positive area exist when the subset is required to be star-convex. However Besicovitch (\cite{Be1}, \cite{Be2}) showed that in general, 
there is no lower bound on the area, there exist Kakeya needle subsets of the plane with arbitrarily small positive Lebesgue measure. 
Cunningham (\cite{Cu}) later refined this work to show there also exist simply-connected Kakeya needle subsets of the unit disk of arbitrarily small positive Lebesgue measure.

Besicovitch also considered more general Kakeya sets which were defined as subsets of $\mathbb{R}^n$ which contain a line segment of length 1 
in every given direction of $\mathbb{R}^n$. Here, unlike the case for Kakeya needle sets, the variation of the needle as a function of direction need not 
be continuous, or even Borel measurable. Besicovitch showed that there are Kakeya sets of Lebesgue measure zero in every $\mathbb{R}^n, n \geq 2$. 
Regardless these examples have maximal Hausdorff dimension $n$ and the famous Kakeya conjecture asks to show that this is true in general, i.e., 
that every Kakeya subset of 
$\mathbb{R}^n$ has Hausdorff (and hence Minkowski) dimension $n$.  This conjecture is still open for $n \geq 3$. This conjecture is related to many 
questions in Harmonic analysis, for example C. Fefferman (\cite{Fe}) used Besicovitch's construction of Kakeya sets of measure zero to disprove the ball multiplier 
conjecture for $L^p(\mathbb{R}^n), p \neq 2$. Further interesting applications to analysis are discussed in \cite{Bo}, \cite{Ta} and \cite{Wo}.

In this paper we will not consider Kakeya sets but only configurations similar to the Kakeya needle sets where the needle is required to move continuously. In fact, we will most often be interested in continuous line configurations where the motion goes thru a line in each direction (or parallel class) exactly once. These are sets in which it is possible to move a line continuously such that every direction in the ambient space is achieved exactly once. 

More precisely, we will consider continuous functions
$$
\sigma: \mathbb{R}P^{n-1} \to \mathbb{R}^n
$$
where $\mathbb{R}P^{n-1}$ is the projective space of $\mathbb{R}^n$, i.e., the space of lines through the origin in $\mathbb{R}^n$. 

For each line $L$ through the origin, $\sigma$ picks out a displacement $\sigma(L)$ in a continuous manner and we study the set
$ |\sigma|=\cup_{L \in \mathbb{R}P^{n-1}} (\sigma(L) + L) $.
Notice this set contains a line in every direction but is more constrained as the line placement varies continuously and since the configuration is minimal 
in the sense that for each line through the origin, {\bf exactly one} line parallel to it is used during the motion.
We call such a $\sigma$, a continuous unoriented Kakeya line configuration and $|\sigma|$ is the underlying space of the motion.

Similarly we consider continuous functions
$$
\sigma: S^{n-1} \to \mathbb{R}^n
$$
and call them continuous oriented Kakeya line configurations.

Notice for either the oriented or unoriented configurations above, one can consider either the space $|\sigma|$ swept out of the lines of the configuration or 
the smaller needle space $|\hat{\sigma}| = \cup_{L \in \mathbb{R}P^{n-1}} (\sigma(L) + I(L))$ where $I(L)$ is the interval of length $1$ centered about the origin 
in $L$. The classical Kakeya conjecture is equivalent to showing for any $\sigma: \mathbb{R}P^{n-1} \to \mathbb{R}$ (not necessarily continuous or even Borel measurable), that any Borel set containing $|\hat{\sigma}|$ has Hausdorff dimension $n$. In this paper we will not address this and consider exclusively 
only continuous $\sigma$.

This study was motivated by a paper of Dvir (\cite{Dv}) which looked at an analogous question over finite fields. Dvir showed 
that if $E$ is a subset of a finite dimensional vector space over a finite field which contains a line in every direction, then $E$ contains a "positive proportion" of 
the vector space. Under suitable assumptions, we recover the same picture for continuous Kakeya line configurations in this paper.

We define an analog of this question in any connected Lie group and in fact any homogeneous space made from such a Lie group and 
provide partial answers to the basic questions about these Kakeya configurations. Here the projective space (respectively sphere) of the Lie algebra $\mathfrak{g}$ 
of the Lie group $G$ becomes the space of unoriented ( respectively oriented) "directions" and one-parameter subgroups take the place of lines. \\
(Though one could generalize even further to Riemannian manifolds and geodesics, to get a single controlling "direction" space, one would have to assume 
a very controlled parallelization of the tangent bundle and we do not pursue this here.)

The techniques are mostly from algebraic topology and though some more can be said using measure theory and harmonic analysis, we 
will not do so in this note, choosing to focus primarily on the topological aspects. As a course, we will not shy away from using some basic machinery, 
even though a more elementary proof might be available, if we feel the general viewpoint sheds more light on the situation in other spaces.

The underlying space of the configuration $\sigma: \mathbb{R}P^{n-1} \to \mathbb{R}^n$ is defined to be 
$|\sigma| = \cup_{L \in RP^{n-1}} (\sigma(L) + L)$. A continuous unoriented Kakeya line set is a subset of $\mathbb{R}^n$ that contains the underlying space 
of some continuous unoriented Kakeya line configuration. Intuitively, it is a set where it is possible to move a line continuously such that during the motion 
every parallel class of lines is used exactly once.

Using some basic algebraic topology, we show:

\begin{thm}
The only continuous unoriented Kakeya line set in $\mathbb{R}^n$ is $\mathbb{R}^n$ itself, i.e., for every continuous map $\sigma: \mathbb{R}P^{n-1} 
\to \mathbb{R}^n$, $|\sigma| = \cup_{L \in \mathbb{RP}^{n-1}} (\sigma(L) + L) = \mathbb{R}^n$. 
\end{thm}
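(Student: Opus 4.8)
The plan is to fix an arbitrary point $p\in\mathbb{R}^n$ and prove $p\in|\sigma|$; since $p$ is arbitrary this yields $|\sigma|=\mathbb{R}^n$. The case $n=1$ is trivial, so assume $n\ge 2$. Equip $\mathbb{R}^n$ with the standard inner product and, for a line $L$ through the origin, let $\pi_{L^\perp}\colon\mathbb{R}^n\to L^\perp$ be orthogonal projection. Then $p\in\sigma(L)+L$ if and only if $p-\sigma(L)\in L$, i.e. if and only if $\pi_{L^\perp}\bigl(p-\sigma(L)\bigr)=0$. The assignment $s\colon L\mapsto \pi_{L^\perp}\bigl(p-\sigma(L)\bigr)$ is a continuous section of the rank-$(n-1)$ real vector bundle $\gamma^\perp\to\mathbb{R}P^{n-1}$ whose fiber over $L$ is $L^\perp$, namely the orthogonal complement of the tautological line bundle $\gamma$. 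So it suffices to show that \emph{every} continuous section of $\gamma^\perp$ has a zero.

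I would deduce this from a characteristic-class computation. Since $\gamma^\perp\oplus\gamma$ is the trivial bundle $\mathbb{R}P^{n-1}\times\mathbb{R}^n$, the total Stiefel--Whitney class satisfies $w(\gamma^\perp)=w(\gamma)^{-1}=(1+a)^{-1}=1+a+a^2+\cdots+a^{n-1}$ in $H^*(\mathbb{R}P^{n-1};\mathbb{Z}/2)\cong\mathbb{Z}/2[a]/(a^n)$, where $a$ generates $H^1$. Hence the top Stiefel--Whitney class $w_{n-1}(\gamma^\perp)=a^{n-1}$ is the nonzero generator of $H^{n-1}(\mathbb{R}P^{n-1};\mathbb{Z}/2)$. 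If some section of $\gamma^\perp$ were nowhere zero, then (using the metric) $\gamma^\perp$ would split as $\underline{\mathbb{R}}\oplus E$ with $E$ of rank $n-2$, and the Whitney sum formula would give $w_{n-1}(\gamma^\perp)=w_{n-1}(E)=0$, a contradiction. So our section $s$ vanishes at some $L_0\in\mathbb{R}P^{n-1}$.

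It remains to unwind the conclusion: $s(L_0)=0$ says $p-\sigma(L_0)\perp L_0^\perp$, hence $p-\sigma(L_0)\in(L_0^\perp)^\perp=L_0$, i.e. $p\in\sigma(L_0)+L_0\subseteq|\sigma|$, as desired. (Equivalently, one may perturb $s$ to be transverse to the zero section and count its zeros mod $2$: this count equals $\langle w_{n-1}(\gamma^\perp),[\mathbb{R}P^{n-1}]\rangle=1\ne 0$, so $s$ has a zero; when $n=2$ one can instead run an elementary winding-number argument on $\mathbb{R}P^1\cong S^1$, and for $n$ odd a Lefschetz fixed-point argument applied to $L\mapsto[p-\sigma(L)]$ also works.)

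The argument is essentially all set-up: there is no analytic subtlety, and the only real decision is to recognize the bundle $\gamma^\perp$ and that its top Stiefel--Whitney class is nonzero because $\mathbb{R}P^{n-1}$ is a closed $(n-1)$-manifold whose mod-$2$ cohomology is a truncated polynomial ring; working mod $2$ conveniently avoids any orientability issues with $\gamma^\perp$. The one step that deserves a line of care is the implication ``$w_{n-1}\ne 0\Rightarrow$ every section has a zero,'' which is exactly the splitting argument above (equivalently, $w_{n-1}$ is the primary, and here only, obstruction to a nowhere-zero section of a rank-$(n-1)$ bundle over an $(n-1)$-complex).
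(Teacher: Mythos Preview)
Your proof is correct and follows essentially the same route as the paper: both identify the relevant section of $\gamma^{\perp}$, compute $w_{n-1}(\gamma^{\perp})=a^{n-1}\neq 0$ via the Whitney sum formula for $\gamma\oplus\gamma^{\perp}=\epsilon^n$, and use the splitting argument to force a zero. The only cosmetic difference is that the paper first shows $0\in|\sigma|$ and then translates, whereas you absorb the translation into the section $L\mapsto\pi_{L^{\perp}}(p-\sigma(L))$ from the start.
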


Examples are also given showing that this theorem fails if either the continuity assumption is dropped or if more than one line from each parallel class 
is allowed during the motion (i.e., if $\sigma$ assigns more than one output to an input and is not a function). 

In the oriented case we show:

\begin{thm}
The only continuous oriented Kakeya line set  in $\mathbb{R}^{2n+1}$ is $\mathbb{R}^{2n+1}$ itself, i.e., for every continuous map $\sigma: S^{2n} 
\to \mathbb{R}^{2n+1}$, $|\sigma| = \cup_{x \in S^{2n}} (\sigma(x) + L_x) = \mathbb{R}^{2n+1}$ for any $n \geq 1$. 
Here $L_x$ is the line passing thru the origin and $x$ oriented in direction pointing from the origin to $x$. 
On the other hand, in $\mathbb{R}^{2n}$ there exist continuous oriented Kakeya line sets $|\sigma|$ that are disjoint from any prescribed bounded set, and so in particular 
$|\sigma|$ need not be all of $\mathbb{R}^{2n}$. 
\end{thm}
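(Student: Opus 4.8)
The plan is to split the statement into two parts: the positive result in odd dimensions $\mathbb{R}^{2n+1}$ and the counterexamples in even dimensions $\mathbb{R}^{2n}$.

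For the odd-dimensional case, I would proceed by contradiction, mimicking the strategy that presumably underlies the unoriented theorem above. Suppose $p \in \mathbb{R}^{2n+1}$ is a point \emph{not} in $|\sigma| = \bigcup_{x \in S^{2n}}(\sigma(x) + L_x)$. After translating we may assume $p = 0$. Then for every $x \in S^{2n}$, the line $\sigma(x) + L_x$ misses the origin; equivalently, $\sigma(x)$ is not a scalar multiple of $x$. The key construction is a map $S^{2n} \to S^{2n}$ that measures "how far $\sigma(x)$ is from being parallel to $x$": project $\sigma(x)$ onto the hyperplane orthogonal to $x$ and normalize, i.e. set
$$
F(x) = \frac{\sigma(x) - \langle \sigma(x), x\rangle x}{\|\sigma(x) - \langle \sigma(x), x\rangle x\|}.
$$
This is well-defined and continuous precisely because $0 \notin \sigma(x) + L_x$ forces $\sigma(x) - \langle\sigma(x),x\rangle x \ne 0$. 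By construction $F(x) \perp x$ for all $x$, so $F$ is a continuous unit tangent vector field on $S^{2n}$. The Hairy Ball Theorem (nonexistence of a nowhere-zero tangent vector field on even spheres) gives the contradiction. Hence no such $p$ exists and $|\sigma| = \mathbb{R}^{2n+1}$.

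For the even-dimensional case, I would exhibit an explicit $\sigma : S^{2n-1} \to \mathbb{R}^{2n}$ whose configuration avoids any prescribed bounded set. Identify $\mathbb{R}^{2n} \cong \mathbb{C}^n$ and use the standard nowhere-zero tangent field on $S^{2n-1}$ coming from the complex structure, $v(x) = ix$, which satisfies $\langle v(x), x\rangle = 0$ and $\|v(x)\| = 1$. Given $R > 0$, set $\sigma(x) = R\, v(x) = R\, ix$. Then $\sigma(x) + L_x = \{R\,ix + tx : t \in \mathbb{R}\}$, and every point of this line has norm $\ge R$ since $ix \perp x$. Thus $|\sigma|$ is disjoint from the open ball of radius $R$ about the origin, and by translation one can avoid any prescribed bounded set. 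One should check $\sigma$ is continuous (clear) and that this indeed defines an oriented Kakeya line configuration in the sense of the paper (it assigns to each oriented direction $x$ exactly one line in that direction), which is immediate.

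The main obstacle is ensuring the normalization in the definition of $F$ is genuinely continuous and that the reduction "line misses origin $\iff$ $\sigma(x)$ not parallel to $x$" is airtight — in particular handling the orientation bookkeeping so that $S^{2n}$ (not $\mathbb{R}P^{2n}$) is the relevant domain, which is exactly what makes the Hairy Ball Theorem applicable and is why the parity of the dimension matters. A secondary point is to confirm that the obvious even-dimensional construction does not accidentally repeat a direction or fail continuity at antipodal points; since we work on the sphere rather than projective space, $x$ and $-x$ are distinct inputs and $v(-x) = -v(x)$ causes no conflict.
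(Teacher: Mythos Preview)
Your proposal is correct and follows essentially the same route as the paper: project $\sigma(x)$ onto the tangent space at $x$ to obtain a continuous tangent vector field on $S^{2n}$, invoke the Hairy Ball Theorem to force a zero (hence a line through the origin in $|\sigma|$), and combine with translation; for $\mathbb{R}^{2n}$, use a scaled nowhere-vanishing tangent field on $S^{2n-1}$. The only cosmetic differences are that you argue by contradiction and translate first (the paper shows $0\in|\sigma|$ directly and then translates), and you normalize the tangent field (unnecessary, since Hairy Ball only needs nowhere-zero, not unit length) and give the explicit field $v(x)=ix$ where the paper just cites the existence of one.
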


Again this theorem fails if either $\sigma$ is not continuous or if $\sigma$ is not a function, i.e., more than one line in a given direction is used during 
the motion.

It is now not hard to extend these questions to the setting of connected Lie groups $G$. Such a Lie group has a Lie algebra $\mathfrak{g}$, 
and it is well known (see Warner) that the Lie subalgebras of $\mathfrak{g}$ are in one to one correspondence with the connected subgroups of 
$G$. In particular for every line through the origin in $\mathfrak{g}$ there corresponds a unique one-parameter subgroup of $G$ (which will either be an 
immersed line or a circle group.) through the exponential-log correspondence. The one-parameter subgroups of $\mathbb{R}^n$ are exactly the lines through the origin and these will take the place of lines in a general connected Lie group. The analog of parallel lines, will be left translates $gH$ of a given 
one parameter subgroup $H$. (One can also replace left with right thru out this paper without changing the results though the translates in general 
will be different in a non-Abelian Lie group).

Let $P(\mathfrak{g})$ be the projective space corresponding to the finite dimensional real vector space $\mathfrak{g}$. For a line $L \in \mathfrak{g}$ 
we let $\exp(L)$ denote the corresponding one-parameter subgroup of $G$ and $g \exp(L)$ for the typical "parallel" left translate. We then define:

\begin{defn} Let $G$ be a connected Lie group with Lie algebra $\mathfrak{g}$, and let $P(\mathfrak{g})$ be the projective space 
associated to $\mathfrak{g}$. A continuous unoriented Kakeya line configuration is a continuous map $\sigma: P(\mathfrak{g}) \to G$. 
The underlying space to this configuration, denoted $|\sigma|$ is defined as $\sigma= \cup_{L \in P(\mathfrak{g})}(\sigma(L)\exp(L))$. 
A continuous unoriented Kakeya line set $X$ is a subspace of $G$ which contains $|\sigma|$ for some continuous unoriented Kakeya line configuration $\sigma$.
Similar definitions hold for oriented Kakeya line configurations where $P(\mathfrak{g})$ is replaced by $S(\mathfrak{g})$, the sphere of $\mathfrak{g}$ 
with respect to some positive definite inner product on $\mathfrak{g}$.
\end{defn}

We prove, among other things:

\begin{thm} Let $G$ be a connected nilpotent Lie group, then the only continuous unoriented Kakeya line set in $G$ is $G$ itself, i.e., 
$|\sigma|=G$ for any continuous unoriented Kakeya configuration $\sigma$ in $G$.
A similar theorem holds for oriented Kakeya line sets if in addition, $G$ is assumed to be odd dimensional. This theorem 
also holds for connected solvable Lie groups of type E.
\end{thm}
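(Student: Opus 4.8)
The plan is to reduce the statement, through the exponential map, to the two Euclidean theorems already proved above. The point is that when $\exp\colon\mathfrak g\to G$ is a diffeomorphism, membership of a point in a left translate of a one-parameter subgroup becomes, after applying $\log$, membership of a point in a linear subspace of $\mathfrak g\cong\mathbb R^n$ — precisely the situation governed by Theorem 1.1 (the unoriented Euclidean case) and Theorem 1.2 (the oriented Euclidean case).

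First I would dispose of the case in which $\exp\colon\mathfrak g\to G$ is a diffeomorphism. This covers simply connected nilpotent $G$ (Malcev's theorem) and connected solvable $G$ of type E, where this is essentially the definition; note that such a $G$ is then diffeomorphic to $\mathfrak g$, hence simply connected. Fix $g\in G$; it suffices to prove $g\in|\sigma|$. Define the continuous map $\tau\colon P(\mathfrak g)\to\mathfrak g$ by $\tau(L)=\log\bigl(\sigma(L)^{-1}g\bigr)$. Since $\exp$ carries the line $L$ bijectively onto the one-parameter subgroup $\exp(L)$, we have $g\in\sigma(L)\exp(L)$ iff $\sigma(L)^{-1}g\in\exp(L)$ iff $\tau(L)\in L$. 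Fixing a linear isomorphism $\mathfrak g\cong\mathbb R^n$ and applying Theorem 1.1 to the continuous configuration $\tau$ gives $0\in|\tau|=\bigcup_{L}(\tau(L)+L)$, so $-\tau(L)\in L$, hence $\tau(L)\in L$, for some $L$; thus $g\in|\sigma|$. As $g$ was arbitrary, $|\sigma|=G$. For the oriented statement one runs the identical argument with $P(\mathfrak g)$ replaced by $S(\mathfrak g)\cong S^{n-1}$ and Theorem 1.2 in place of Theorem 1.1; this is where the hypothesis that $n=\dim G$ be odd enters, since Theorem 1.2 needs an odd-dimensional ambient space.

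Next I would remove simple connectivity in the nilpotent case by passing to the universal cover. Let $\pi\colon\tilde G\to G$ be the universal covering homomorphism; $\tilde G$ is a simply connected nilpotent Lie group with the same Lie algebra $\mathfrak g$, and $\Gamma=\ker\pi=\pi_1(G)$ is a discrete subgroup of the center of $\tilde G$. Because $\exp_{\tilde G}$ is bijective, $\tilde G$ — and hence $\Gamma$ — is torsion-free: if $\gamma^k=e$ then $\exp_{\tilde G}(k\log\gamma)=\exp_{\tilde G}(0)$, forcing $\gamma=e$. Consequently every homomorphism from $\pi_1(P(\mathfrak g))=\mathbb Z/2$ to $\Gamma$ is trivial (the case $n=1$, where $P(\mathfrak g)$ is a point, being immediate), so by the lifting criterion $\sigma$ lifts to a continuous $\tilde\sigma\colon P(\mathfrak g)\to\tilde G$. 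Given $g\in G$, pick $\tilde g\in\pi^{-1}(g)$; the diffeomorphic case applied in $\tilde G$ produces an $L$ with $\tilde g\in\tilde\sigma(L)\exp_{\tilde G}(L)$, and applying $\pi$ — using $\pi\circ\exp_{\tilde G}=\exp_G$ and $\pi\circ\tilde\sigma=\sigma$ — yields $g\in\sigma(L)\exp_G(L)\subseteq|\sigma|$. Hence $|\sigma|=G$. In the oriented case $S(\mathfrak g)\cong S^{n-1}$ is already simply connected, since $n=\dim G$ odd forces $n\ne2$, so $\sigma$ lifts automatically and the same reduction applies.

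I do not expect a serious obstacle here. The only substantive inputs are Theorems 1.1 and 1.2 and the classical fact that $\exp$ is a global diffeomorphism for simply connected nilpotent (and type E solvable) groups; everything else — continuity of $\tau$, nilpotency of the universal cover, torsion-freeness of $\Gamma$ — is routine. The single identity that must be got exactly right is the equivalence $g\in\sigma(L)\exp(L)\iff\log(\sigma(L)^{-1}g)\in L$, since it is what lets the Euclidean theorems do the work, and the only place where care about basepoints and hypotheses is needed is the construction of the lift $\tilde\sigma$ in the non-simply-connected nilpotent case.
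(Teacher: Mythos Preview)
Your overall strategy matches the paper's --- reduce via $\exp$ to the Euclidean Theorems~1.1 and~1.2, and handle non-simply-connected groups by lifting $\sigma$ to the universal cover using torsion-freeness of $\Gamma$ --- but there is a genuine gap in dimension~$2$. You assert $\pi_1(P(\mathfrak g))=\mathbb Z/2$; this is false when $\dim\mathfrak g=2$, where $P(\mathfrak g)=\mathbb{R}P^1\cong S^1$ has $\pi_1=\mathbb Z$, and nontrivial homomorphisms $\mathbb Z\to\Gamma$ abound when $\Gamma=\mathbb Z$ (the cylinder $S^1\times\mathbb R$) or $\mathbb Z^2$ (the torus). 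So an unoriented configuration in the cylinder or torus need not lift to $\mathbb R^2$, and your reduction breaks down precisely there. (A related slip: you claim that a connected solvable group of type~E has $\exp$ a global diffeomorphism and is therefore simply connected. The Dixmier--Saito theorem only guarantees this for the \emph{simply connected} model; the cylinder is nilpotent, hence type~E in the paper's sense, yet not simply connected. Folding type~E into your universal-cover paragraph repairs this, but it then inherits the same dimension-$2$ problem.)

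The paper treats dimension~$2$ by a separate, nontrivial argument. After reducing the torus to the cylinder $\mathbb C^*$, it lifts $\sigma$ not to a map out of $\mathbb{R}P^1$ but only to a \emph{path} $p\colon[0,2\pi]\to\mathbb C$ via the complex-exponential covering (which always works, since $[0,2\pi]$ is simply connected), and then must show that $|p|=\bigcup_t(p(t)+L_t)$ meets the kernel $K=2\pi i\mathbb Z$. This is done by a winding-number contradiction: assuming $|p|\cap K=\emptyset$, one compactifies each line $L_t$ at infinity to obtain a basepoint-preserving homotopy of loops $h_t\colon S^1\to S^2\setminus\{a,b\}$ for two suitably chosen points $a,b\in K$; a horizontal line $h_0$ separates $a$ from $b$ (nonzero winding number after stereographic projection), while a vertical line $h_{\pi/2}$ does not (null-homotopic), contradicting $h_0\simeq h_{\pi/2}$. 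This is an additional topological idea, not a routine patch, and without it your proof does not cover the cylinder or the torus.
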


Again, the theorem is easily seen to not be true without the continuity assumption. 

In a general connected Lie group, one can prove weaker results. One has to assume the Kakeya line configuration is "linear" (liftable via the exponential map) or taut (see sections below) 
which follows for example if $\sigma: P(\mathfrak{g}) \to G$ has image contained in a suitable open neighborhood of the identity (one which is diffeomorphic 
to an open ball in $\mathfrak{g}$ under the exponential-log correspondence.) Then one has:

\begin{thm} Let $G$ be a connected Lie group and let $\sigma: P(\mathfrak{g}) \to G$ be a linear continuous unoriented Kakeya line configuration, 
then $|\sigma|$ contains the identity element. Furthermore any taut unoriented Kakeya line configuration $\sigma$ has $|\sigma|$ contain an open neighborhood of the identity in $G$ and hence has positive (left) Haar measure. The same statements hold for oriented Kakeya line configurations if the dimension of $G$ is odd.
\end{thm}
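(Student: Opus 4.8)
The plan is to reduce both assertions to a single topological input and then apply it repeatedly. Write $n=\dim G=\dim\mathfrak g$ and identify $P(\mathfrak g)$ with $\mathbb{RP}^{n-1}$. The input I would isolate is: \emph{every continuous map $f\colon P(\mathfrak g)\to\mathfrak g$ fixes some line, i.e.\ there is $L\in P(\mathfrak g)$ with $f(L)\in L$.} I would prove this by viewing $f$ as a section of the trivial bundle $\underline{\mathfrak g}=P(\mathfrak g)\times\mathfrak g$, letting $\gamma\subset\underline{\mathfrak g}$ be the tautological line bundle (with fibre $L$ over $L$), setting $Q=\underline{\mathfrak g}/\gamma$, and projecting $f$ to a section $\bar f$ of $Q$; a zero of $\bar f$ is exactly a line with $f(L)\in L$. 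Since $\gamma\oplus Q\cong\underline{\mathfrak g}$, the Whitney formula gives $w(Q)=w(\gamma)^{-1}=(1+a)^{-1}=1+a+\cdots+a^{n-1}$ in $H^*(\mathbb{RP}^{n-1};\mathbb Z/2)=\mathbb Z/2[a]/(a^n)$, so the top class $w_{n-1}(Q)=a^{n-1}\neq0$; as a rank-$(n-1)$ bundle with a nowhere-zero section would split off a trivial line bundle and hence have $w_{n-1}=0$, the bundle $Q$ has no nowhere-zero section, so $\bar f$ must vanish somewhere.

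Given this, the first assertion is immediate: write a linear $\sigma$ as $\exp\circ\tilde\sigma$ with $\tilde\sigma\colon P(\mathfrak g)\to\mathfrak g$ continuous, pick $L_0$ with $\tilde\sigma(L_0)\in L_0$, so $\sigma(L_0)=\exp(\tilde\sigma(L_0))\in\exp(L_0)$; since $\exp(L_0)$ is a subgroup, $\sigma(L_0)^{-1}\in\exp(L_0)$ as well, and hence the identity $e=\sigma(L_0)\sigma(L_0)^{-1}\in\sigma(L_0)\exp(L_0)\subseteq|\sigma|$.

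For the second assertion I would first fix a ball $B'\subseteq\mathfrak g$ about $0$ on which $\exp$ is a diffeomorphism onto an open set, so that $\log:=(\exp|_{B'})^{-1}$ is a continuous inverse on $\exp(B')$, and then a symmetric ball $B\subseteq B'$ small enough that $\exp(B)\exp(B)\subseteq\exp(B')$ (possible by continuity of multiplication at $(e,e)$); the tautness hypothesis is used here to arrange $\sigma(P(\mathfrak g))\subseteq\exp(B)$ — which, incidentally, makes $\sigma$ linear with lift $\log\circ\sigma$. Then for every $p\in\exp(B)$ and every $L$ one has $\sigma(L)^{-1}p\in\exp(B)^{-1}\exp(B)=\exp(B)\exp(B)\subseteq\exp(B')$ (using $\exp(B)^{-1}=\exp(-B)=\exp(B)$), so one gets a well-defined continuous map
\[
\tilde\sigma_p\colon P(\mathfrak g)\to B'\subseteq\mathfrak g,\qquad \tilde\sigma_p(L)=\log\!\big(\sigma(L)^{-1}p\big).
\]
Applying the topological input to $\tilde\sigma_p$ yields a line $L$ with $\tilde\sigma_p(L)\in L$, hence $\sigma(L)^{-1}p=\exp(\tilde\sigma_p(L))\in\exp(L)$, i.e.\ $p\in\sigma(L)\exp(L)\subseteq|\sigma|$. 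Thus the open set $\exp(B)$ lies in $|\sigma|$, so $|\sigma|$ contains an open neighbourhood of $e$ and therefore has positive left Haar measure. I expect this to be the only real difficulty: for a general $\sigma\colon P(\mathfrak g)\to G$ the products $\sigma(L)^{-1}p$ roam over $G$, a single-valued continuous logarithm need not exist, and $\exp(L)$ may be a circle, so the reduction ``$p\in|\sigma|\iff\tilde\sigma_p$ fixes a line'' breaks down; tautness (or having image in a ball neighbourhood of $e$) is precisely what confines the construction to a chart where $\exp$ and $\log$ are inverse diffeomorphisms, after which the bundle computation finishes the argument — which is why the hypothesis cannot simply be dropped.

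For oriented configurations $\sigma\colon S(\mathfrak g)=S^{n-1}\to G$ I would run the same scheme with $S^{n-1}$ replacing $P(\mathfrak g)$. The bundle $Q$ is replaced by $Q'=\underline{\mathfrak g}/\gamma'$ over $S^{n-1}$ with $\gamma'_x=\mathbb Rx$; fixing an inner product, the metric splitting identifies $Q'$ with $TS^{n-1}$, whose Euler number is $\chi(S^{n-1})=1+(-1)^{n-1}$. When $n=\dim G$ is odd this is nonzero, so by Poincar\'e--Hopf (the hairy-ball theorem) every section of $Q'$ vanishes somewhere; equivalently every continuous $f\colon S^{n-1}\to\mathfrak g$ satisfies $f(x)\in\mathbb Rx$ for some $x$, and the two arguments above carry over verbatim. (When $n$ is even, $S^{n-1}$ admits a nowhere-zero vector field, and scaling it produces oriented configurations whose underlying sets avoid any prescribed bounded set, matching the even-dimensional counterexamples noted earlier; this is why oddness is required here.)
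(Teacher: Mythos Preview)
Your argument is essentially the paper's: the Stiefel-Whitney computation for $Q\cong\gamma^\perp$ is exactly the lemma that every continuous section of $\gamma^\perp$ has a zero, the linear case is handled identically (find $L_0$ with the lift in $L_0$, so $\sigma(L_0)\in\exp(L_0)$ and $e\in\sigma(L_0)\exp(L_0)$), and the oriented odd-dimensional case via the hairy-ball theorem is the same.

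The one place to tighten is the ball bookkeeping in the taut case. You fix $B'\subseteq\mathfrak g$ on which $\exp$ is a diffeomorphism, shrink to a symmetric $B\subseteq B'$ with $\exp(B)\exp(B)\subseteq\exp(B')$, and only \emph{afterwards} invoke tautness to place $\sigma(P(\mathfrak g))$ inside $\exp(B)$. Tautness, however, only guarantees that the image lies in \emph{some} $\exp$-diffeomorphic neighborhood $U$; there is no reason one can simultaneously make $B$ large enough to contain $\log(\sigma(P(\mathfrak g)))$ and small enough that $\exp(B)\exp(B)$ stays inside a set on which $\exp$ is still injective. The paper handles this the other way round: take $U$ as given by tautness, note that the image $K=\sigma(P(\mathfrak g))$ is compact in $U$, and apply the tube lemma to the multiplication map to produce an open symmetric $W\ni e$ with $wK\subseteq U$ for every $w\in W$; then each left translate $T_w\circ\sigma$ is still taut, hence linear, so $e\in w\,|\sigma|$ and $W\subseteq|\sigma|$. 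Your construction $\tilde\sigma_p(L)=\log(\sigma(L)^{-1}p)$ works once you supply the analogous tube-lemma step (and, since you need $K^{-1}\subseteq U$ at $p=e$, either arrange $U$ symmetric or switch to $\log(p^{-1}\sigma(L))$, which is precisely the paper's left translation).
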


Finally the definition of continuous Kakeya line configuration $\sigma: P(\mathfrak{g}) \to G$ behaves well with respect to Lie group covering maps 
$\pi: G \to M$. Here $\mathfrak{g}=\mathfrak{M}$ canonically and $\pi \circ \sigma$ is then a continuous Kakeya line configuration in $M$. 
Furthermore, basic covering space theory shows (see section below) we can lift continuous Kakeya line configurations from $M$ to $G$ under 
mild hypothesis. Thus for a given finite dimensional, real, Lie algebra $\mathfrak{g}$ it is often sufficient to understand the questions about continuous Kakeya line 
configurations in the unique simply-connected Lie group with Lie algebra $\mathfrak{g}$. 

Given a closed subgroup $K$ (whether normal or not) of a Lie group $G$, $K \backslash G$ is a smooth manifold, and one has a quotient map $\pi: G \to K\backslash G$ which takes left cosets of one-parameter subgroups of $G$ to a 
distinguished set of curves in $K\backslash G$. Thus a continuous Kakeya line configuration $\sigma: P(\mathfrak{g}) \to G$ gives rise to a continuous 
unoriented Kakeya line configuration $\hat{\sigma}=\pi \circ \sigma: P(\mathfrak{g}) \to K\backslash G$. Furthermore, given a continuous map 
$P(\mathfrak{g}) \to K\backslash G$, one can use fiber bundle theory to decide when it comes from a Kakeya configuration in $G$. We study such configurations briefly in section~\ref{section: homogeneous}.

As a final comment in this introductory section, we remark that given a (not necessarily continuous) configuration $\sigma: P(\mathfrak{g}) \to G$ in a connected Lie group, and a choice of positive definite inner product on $\mathfrak{g}$, we can form, 
$$
|\hat{\sigma}|=\cup_{L \in P(\mathfrak{g})} (\sigma(L) exp(I(L)))
$$ where $I(L)$ is the part of the line $L$ within distance $\frac{1}{2}$ of the origin. 
A Borel set containing such a $\sigma$ is the analog of a Kakeya set in a connected Lie group. 
To distinguish this from the case where $\sigma$ is continuous we will refer to these as Kakeya-Besicovitch sets.
Note that Hausdorff and Minkowski dimension are metric dependent and one can easily find examples of two metrics that metrize the same 
topological group with different dimension theory. (The $2$-adic integers with the $2$-adic metric have Hausdorff dimension 1 while with the 
metric induced from the real line by the homeomorphism with the Cantor set, it has a fractional Hausdorff dimension.)
However for any two positive definite inner products on $\mathfrak{g}$, the corresponding left-invariant Riemannian metrics on $G$ have the same 
Hausdorff dimensions for any Borel subset. We will use such a canonical version of Hausdorff dimension on a Lie group to state:

\begin{question}[Lie variant of Kakeya Conjecture]
Let $G$ be a connected Lie group and $K$ a Kakeya-Besicovitch subset of $G$. Does the Hausdorff dimension of $K$ have to equal the dimension of $G$?
\end{question}

Note the case of simply-connected, Abelian Lie groups, i.e., $(\mathbb{R}^n, +)$ is exactly the classical Kakeya conjecture which is known in dimensions 1 and 2, 
(see \cite{Da}).
The Lie group $\{ \begin{bmatrix} a & b \\ 0 & 1 \end{bmatrix} | a > 0, b \in \mathbb{R} \}$ is the smallest nonAbelian example to consider where an answer might 
be obtainable from current technology. Though the underlying space of this example is diffeomorphic to $\mathbb{R}^2$, the one-parameter subgroups do 
not correspond to lines in general under this correspondence.

Also we state a discrete version of this question, let us define a subset $E$ of a finite group $G$ to be a Kakeya set if it contains a left coset of every cyclic subgroup of $G$. 
We then ask: 

\begin{question}[Discrete variant of Kakeya Conjecture]
Is there a "sharp" explicit formula for a positive constant $c=c(e,g)$, such that for every finite $g$-generated group $G$ of 
exponent dividing $e$, and Kakeya subset $E \subseteq G$, one has $|E| \geq c|G|$.
\label{question: discrete}
\end{question}

Note, by the solution of the restricted Burnside conjecture (\cite{Ze}, \cite{Ze2}), there are only finitely many $g$-generated groups whose exponent divides $e$. 
It is clear in this case there is an absolute constant $c=c(e,g) > 0$ such that the size of any Kakeya set in these groups is at least $c|G|$. 
(This is because in any given group, a Kakeya set has to have at least size $1$ and so has to be at least $\frac{1}{|G|}$ worth of $G$. Thus 
we may take $c(e,g)$ to be the reciprocal of the size of the largest $g$-generated subgroup of exponent $e$. This $c$ works but is somewhat silly 
and definitely not sharp.) 
Thus the existence of a positive $c=c(e,g)$ in the last question is clear, but a sharp, useful, explicit formula for it would be nice. 

In work of Dvir (\cite{Dv}) on the case of $G$ a finite product of cyclic groups of prime order, a relatively sharp formula for $c$ was found as were examples showing 
that it must depend on $g$, the number of generators of the group. Sharper formulas for $c$ in this case were obtained in 
\cite{SS}. In \cite{DH}, the case of $G$ a finite product of two cyclic groups of order $p^k$ 
was worked out and an explicit sharp formula for $c$ found. In this work, examples were found showing $c$ must depend on $e$, the exponent of $G$. 

Thus the question as framed definitely has a solution and the dependence of the constant on $g$ and $e$ as stated is necessary - explicit sharp formulas for $c=c(e,g) > 0$ 
are the hopeful goal.

\section{Continuous Kakeya Line Configurations in $\mathbb{R}^n$}

\begin{defn} A continuous unoriented Kakeya line configuration in $\mathbb{R}^n$ is a continuous map 
$\sigma: \mathbb{R}P^{n-1} \to \mathbb{R}^n$. The underlying space of $\sigma$, $|\sigma|$ is defined as
$$
|\sigma| = \cup_{L \in \mathbb{R}P^{n-1}} (\sigma(L) + L)
$$
The associated unoriented Kakeya needle set is $|\hat{\sigma}| = \cup_{L \in \mathbb{R}P^{n-1}}(\sigma(L) + I(L))$ 
where $I(L)$ is the interval of length 1 centered around the origin in $L$. For $1 \leq R < \infty$, the $R$-elongation of the Kakeya needle set 
is the set obtained when we replace $I(L)$ with an interval of length $R$ centered abound the origin in $L$ in this definition. 
Similar definitions hold for oriented variants where $\mathbb{R}P^{n-1}$ is replaced by $S^{n-1}$.
\end{defn}

Note since projective spaces, spheres and intervals are compact spaces, it is not hard to show that the associated Kakeya needle sets (oriented or unoriented) 
and their elongations are compact subspaces of $\mathbb{R}^n$ when $\sigma$ is continuous. Thus $|\sigma|$ itself is $\sigma$-compact and 
hence a Borel set.

Consider the trivial vector bundle $\epsilon^n$ given by $\pi: \mathbb{R}P^{n-1} \times \mathbb{R}^n \to \mathbb{R}P^{n-1}$. 
Notice, for any $L \in \mathbb{R}P^{n-1}$,  the fiber $\pi^{-1}(L)$ splits as $\mathbb{R}^n = L \oplus L^{\perp}$, where $L^{\perp}$ is the orthogonal complement of $L$ 
with respect to the standard dot product of $\mathbb{R}^n$.  Thus $\epsilon^n = \gamma \oplus \gamma^{\perp}$ where 
$\gamma$ is the canonical line bundle of $\mathbb{R}P^{n-1}$ and $\oplus$ is Whitney sum of vector bundles (see \cite{MS}). 

We will need the well known fact that any continuous section of $\gamma^{\perp}$ has a zero. 

\begin{lem}
Every continuous section of $\gamma^{\perp}$ has a zero.
\label{lem: zero}
\end{lem}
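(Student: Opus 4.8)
The plan is to compute the mod-$2$ Stiefel--Whitney classes of $\gamma^{\perp}$ from the splitting $\epsilon^{n} = \gamma \oplus \gamma^{\perp}$ and then run the standard ``the top Stiefel--Whitney class obstructs a nowhere-zero section'' argument, in its most elementary form. Working with $\mathbb{Z}/2$ coefficients throughout lets me ignore orientability of $\mathbb{R}P^{n-1}$, which is what makes the argument uniform in $n$: in the even case $\mathbb{R}P^{n-1}$ is orientable but the relevant Euler class lives in twisted integral cohomology, and this mod-$2$ route sidesteps that bookkeeping entirely.

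First I would recall from \cite{MS} the two standard facts: $H^{*}(\mathbb{R}P^{n-1};\mathbb{Z}/2) \cong \mathbb{Z}/2[a]/(a^{n})$ with $\degr a = 1$, and $w(\gamma) = 1 + a$. Since $\epsilon^{n}$ is trivial, the Whitney product formula applied to $\epsilon^{n} = \gamma \oplus \gamma^{\perp}$ gives $w(\gamma^{\perp}) = w(\gamma)^{-1} = (1+a)^{-1} = 1 + a + a^{2} + \cdots + a^{n-1}$, using $a^{n} = 0$. In particular the top class $w_{n-1}(\gamma^{\perp}) = a^{n-1}$ is the nonzero generator of $H^{n-1}(\mathbb{R}P^{n-1};\mathbb{Z}/2) \cong \mathbb{Z}/2$.

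Then I would argue by contradiction. Suppose $\gamma^{\perp}$ admits a nowhere-vanishing continuous section $s$, and equip $\gamma^{\perp}$ with the fiberwise inner product inherited from $\epsilon^{n}$. The line subbundle $\langle s \rangle \subseteq \gamma^{\perp}$ spanned by $s$ is trivial (it is trivialized by $s$ itself), and $\gamma^{\perp} = \langle s \rangle \oplus \langle s \rangle^{\perp}$, where $\eta := \langle s \rangle^{\perp}$ has rank $(n-1) - 1 = n-2$. Applying the Whitney formula once more, $w_{n-1}(\gamma^{\perp}) = w_{n-1}(\eta)$, and the right-hand side vanishes because $n-1$ exceeds the rank of $\eta$. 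This contradicts $w_{n-1}(\gamma^{\perp}) = a^{n-1} \neq 0$. The degenerate case $n = 1$ is trivial, since then $\gamma^{\perp}$ is the zero bundle over a point. I do not anticipate any serious obstacle here; the one point worth handling with a little care is to phrase the ``no nowhere-zero section'' step as splitting off a trivial line bundle, precisely so that the non-orientability of $\mathbb{R}P^{n-1}$ for odd $n$ never enters.

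If one prefers to avoid characteristic classes, here is the alternative I would offer. Normalize a hypothetical nowhere-zero section to a map $u : \mathbb{R}P^{n-1} \to S^{n-1}$ with $u(L) \perp L$, and lift it along the double cover $p : S^{n-1} \to \mathbb{R}P^{n-1}$ to the even map $g := u \circ p : S^{n-1} \to S^{n-1}$, which satisfies $g(x) \perp x$ for all $x$. Since $g(x) \neq -x$ everywhere, the renormalized straight-line homotopy $t \mapsto \big((1-t)x + t\,g(x)\big)/\lvert (1-t)x + t\,g(x)\rvert$ shows $g \simeq \mathrm{id}$, so $\degr g = 1$. On the other hand $g$ factors through $\mathbb{R}P^{n-1}$, and $p_{*}$ is zero on $H_{n-1}(-;\mathbb{Z}/2)$ (for $n \geq 3$ because $p^{*}a^{n-1} = (p^{*}a)^{n-1} = 0$ as $H^{1}(S^{n-1};\mathbb{Z}/2) = 0$, the cases $n \leq 2$ being direct), so $\degr g$ must be even --- a contradiction. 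I would still present the Stiefel--Whitney argument as the main proof, since it fits the ``basic machinery'' viewpoint of the paper and reuses the computation already attached to $\epsilon^{n} = \gamma \oplus \gamma^{\perp}$.
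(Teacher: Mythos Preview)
Your main argument is correct and is essentially identical to the paper's own proof: compute $w(\gamma^{\perp})=(1+a)^{-1}=1+a+\cdots+a^{n-1}$ from $\epsilon^n=\gamma\oplus\gamma^{\perp}$, note $w_{n-1}(\gamma^{\perp})=a^{n-1}\neq 0$, and derive a contradiction by splitting off a trivial line bundle from a hypothetical nowhere-zero section. The paper does not include your alternative degree argument or the explicit $n=1$ remark, but these are extras rather than differences in approach.
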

\begin{proof}
We recall a proof here quickly.
First we recall some basic facts, see \cite{MS} for details: any vector bundle $\xi$ over a paracompact space has a sequence of Stiefel-Whitney classes $w_0, w_1, w_2, \dots$ 
such that $w_k(\xi) = 0$ when $k > dim(\xi)$. The class $w_i$ is an element of $H^i(X;\mathbb{F}_2)$ and they fit together to give a 
total Stiefel-Whitney class $w(\xi) = 1 + w_1(\xi) + w_2(\xi) + \dots$.  If $\xi = \xi_1 \oplus \xi_2$ then $w(\xi)=w(\xi_1)w(\xi_2)$ by Cartan's formula.
Finally recall $w(\epsilon^k)=1$ for any trivial bundle $\epsilon^k$ and $w(\gamma)=1+a$ where $a$ is a generator of $H^1(\mathbb{R}P^{n-1}, \mathbb{F}_2)$. 

Since $\epsilon^n = \gamma \oplus \gamma^{\perp}$ we have by applying $w$ to both sides and using Cartan's formula that
$$
1 = w(\gamma)w(\gamma^{\perp}) = (1+a)w(\gamma^{\perp})
$$
from which it follows $w(\gamma^{\perp}) = \frac{1}{1+a}=1+a+a^2+ \dots + a^{n-1}$ as we are in characteristic two and 
$H^*(\mathbb{R}P^{n-1};\mathbb{F}_2) = \mathbb{F}_2[a]/(a^n)$ is a truncated polynomial algebra. 

In particular $w_{n-1}(\gamma^{\perp}) = a^{n-1} \neq 0$. Now suppose that there existed a nowhere vanishing section of $\gamma^{\perp}$. Then 
$\gamma^{\perp} = \mu \oplus \epsilon^1$ where $\mu$ is a $(n-2)$-dimensional vector bundle. Thus Cartan's formula would show 
that $\gamma^{\perp}$ would have the same Stiefel-Whitney classes as a $(n-2)$-dimensional vector bundle and in particular, $w_{n-1}(\gamma^{\perp})=0$ 
contradicting what we have established. Thus we conclude $\gamma^{\perp}$ possesses no nowhere vanishing continuous section. 
\end{proof}
 
\begin{lem}
Let $\sigma: \mathbb{R}P^{n-1} \to \mathbb{R}^n$ be a continuous unoriented Kakeya line configuration. Then 
$\hat{0} \in |\sigma|$.
\label{lem: containzero}
\end{lem}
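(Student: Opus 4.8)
The plan is to reformulate $\hat{0}\in|\sigma|$ as the existence of a zero of a suitable section of $\gamma^{\perp}$, and then quote Lemma~\ref{lem: zero}. Note that $\hat{0}\in\sigma(L)+L$ exactly when $-\sigma(L)\in L$, i.e.\ when the orthogonal projection of $\sigma(L)$ onto $L^{\perp}$ vanishes. So it suffices to produce one $L\in\mathbb{R}P^{n-1}$ at which that projection is zero.

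To this end I would define $s\colon\mathbb{R}P^{n-1}\to\mathbb{R}P^{n-1}\times\mathbb{R}^n$ by $s(L)=\bigl(L,\,p_{L^{\perp}}(\sigma(L))\bigr)$, where $p_{L^{\perp}}$ denotes orthogonal projection onto $L^{\perp}$ with respect to the standard dot product. Since the fiber of $\gamma^{\perp}\subseteq\epsilon^n$ over $L$ is precisely $L^{\perp}$, the map $s$ takes values in $\gamma^{\perp}$ and is a section of it. The only point requiring care is continuity of $s$: writing $p_{L^{\perp}}=\operatorname{Id}-\frac{vv^{T}}{\lVert v\rVert^{2}}$ for any nonzero vector $v$ spanning $L$ shows that $L\mapsto p_{L^{\perp}}$ is a well-defined continuous (indeed smooth) map into $\operatorname{End}(\mathbb{R}^n)$, so composing with the continuous map $\sigma$ yields a continuous $s$.

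Now Lemma~\ref{lem: zero} provides a zero of $s$: there is $L_{0}\in\mathbb{R}P^{n-1}$ with $p_{L_{0}^{\perp}}(\sigma(L_{0}))=0$, equivalently $\sigma(L_{0})\in L_{0}$. Then also $-\sigma(L_{0})\in L_{0}$, so $\hat{0}=\sigma(L_{0})+\bigl(-\sigma(L_{0})\bigr)\in\sigma(L_{0})+L_{0}\subseteq|\sigma|$, which is what we wanted.

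I do not anticipate a genuine obstacle here; the two things to be careful about are (i) that $s$ really is a section of $\gamma^{\perp}$ rather than of some other subbundle of $\epsilon^n$, and (ii) the continuous dependence of $p_{L^{\perp}}$ on $L$. Both are immediate from the orthogonal splitting $\epsilon^n=\gamma\oplus\gamma^{\perp}$ already recorded before Lemma~\ref{lem: zero}, in which $\gamma^{\perp}$ has fiber $L^{\perp}$ over the point $L$.
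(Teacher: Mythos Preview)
Your proof is correct and is essentially identical to the paper's own argument: both define the section $L\mapsto P_{L^{\perp}}(\sigma(L))$ of $\gamma^{\perp}$ and invoke Lemma~\ref{lem: zero} to obtain an $L_0$ with $\sigma(L_0)\in L_0$, hence $\hat{0}\in|\sigma|$. You supply a bit more detail on continuity than the paper does, but the method is the same.
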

\begin{proof}
For $L \in \mathbb{R}P^{n-1}$, let $P_L : \mathbb{R}^n \to L^{\perp}$ denote the orthogonal projection onto $L^{\perp}$.
It is not hard to check that the function $L \to P_L(\sigma(L))$ is a continuous section of $\gamma^{\perp}$ 
and that $|\sigma| = \cup_{L \in \mathbb{R}P^{n-1}} (\sigma(L) + L) = \cup_{L \in \mathbb{R}P^{n-1}}(P_L(\sigma(L)) + L)$.
Since $P_L(\sigma(L))$ must have a zero by lemma~\ref{lem: zero}, we conclude that $|\sigma|$ must contain a line $L$ through the origin 
and hence must contain the origin $\hat{0}$.
\end{proof}

\begin{thm}
Let $\sigma: \mathbb{R}P^{n-1} \to \mathbb{R}^n$ be a continuous unoriented Kakeya line configuration, then 
$|\sigma| = \mathbb{R}^n$.
\label{thm: Rn-unoriented}
\end{thm}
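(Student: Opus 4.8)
\section*{Proof proposal for Theorem~\ref{thm: Rn-unoriented}}

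The plan is to reduce the statement to Lemma~\ref{lem: containzero} by exploiting the translation-invariance of the whole setup. Lemma~\ref{lem: containzero} already gives us one point of $|\sigma|$, namely the origin; the point is that the origin plays no distinguished role, and by sliding the configuration we can force any prescribed point of $\mathbb{R}^n$ to lie in the underlying space.

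Concretely, fix an arbitrary $p \in \mathbb{R}^n$ and define $\sigma_p : \mathbb{R}P^{n-1} \to \mathbb{R}^n$ by $\sigma_p(L) = \sigma(L) - p$. This is again a continuous map, hence a continuous unoriented Kakeya line configuration. I would then check the (routine) identity
$$
|\sigma_p| = \bigcup_{L \in \mathbb{R}P^{n-1}} \bigl( \sigma_p(L) + L \bigr) = \bigcup_{L \in \mathbb{R}P^{n-1}} \bigl( (\sigma(L) - p) + L \bigr) = |\sigma| - p,
$$
using only that translation by $-p$ is a bijection of $\mathbb{R}^n$ commuting with unions and that $L$ is a linear subspace (so the translate of $\sigma(L)+L$ by $-p$ is $(\sigma(L)-p)+L$). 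Applying Lemma~\ref{lem: containzero} to $\sigma_p$ yields $\hat{0} \in |\sigma_p| = |\sigma| - p$, i.e. $p \in |\sigma|$. Since $p$ was arbitrary, $|\sigma| = \mathbb{R}^n$.

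The substantive content has all been front-loaded into Lemma~\ref{lem: zero} and Lemma~\ref{lem: containzero} (the Stiefel--Whitney class computation forcing a zero of any section of $\gamma^\perp$), so I do not expect a genuine obstacle here; the only things to be careful about are that the translated map is still continuous (immediate) and that one really does get $|\sigma| - p$ rather than some other translate (immediate from linearity of the subspaces $L$). If anything, the mild subtlety is purely notational: making sure the orthogonal-projection description of $|\sigma|$ used in Lemma~\ref{lem: containzero} is invoked for $\sigma_p$ and not conflated with that of $\sigma$. An alternative phrasing, which avoids even introducing $\sigma_p$ explicitly, is to note directly that the argument of Lemma~\ref{lem: containzero} shows $q \in |\sigma|$ whenever the section $L \mapsto P_L(\sigma(L) - q)$ of $\gamma^\perp$ has a zero, and that section always has a zero by Lemma~\ref{lem: zero}; but the translation bookkeeping is cleaner.
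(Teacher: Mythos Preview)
Your proposal is correct and matches the paper's own proof essentially line for line: the paper also translates $\sigma$ by $-\hat{x}$, observes $|T_{-\hat{x}}\circ\sigma| = |\sigma|-\hat{x}$, applies Lemma~\ref{lem: containzero} to the translate, and concludes $\hat{x}\in|\sigma|$ for arbitrary $\hat{x}$. The only difference is notation ($p$ versus $\hat{x}$, $\sigma_p$ versus $T_{-\hat{x}}\circ\sigma$).
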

\begin{proof}
For any $\hat{x} \in \mathbb{R}^n$, let $T_{-\hat{x}}$ denote the operator of translation by $-\hat{x}$. Note 
that $T_{-\hat{x}} \circ \sigma$ is another continuous unoriented Kakeya line configuration and 
$|T_{-\hat{x}} \circ \sigma| = |\sigma| - \hat{x}$. 
By lemma~\ref{lem: containzero}, this set must contain the origin. Thus $|\sigma|$ itself must contain $\hat{x}$. 
Since $\hat{x}$ was arbitrary, $|\sigma| = \mathbb{R}^n$.
\end{proof}

\begin{cor}
Let $\sigma$ be a continuous unoriented Kakeya line configuration, then for some $1 \leq R < \infty$, the $R$-elongation of the corresponding 
unoriented Kakeya needle set has positive Lebesgue measure. 
\end{cor}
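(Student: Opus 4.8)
The plan is to deduce this directly from Theorem~\ref{thm: Rn-unoriented} by a countable exhaustion argument. By that theorem, $|\sigma| = \cup_{L \in \mathbb{R}P^{n-1}}(\sigma(L)+L) = \mathbb{R}^n$. For $1 \le R < \infty$ write $K_R = \cup_{L \in \mathbb{R}P^{n-1}}(\sigma(L) + I_R(L))$ for the $R$-elongation of the needle set, where $I_R(L)$ denotes the interval of length $R$ centered at the origin in $L$. These sets are nested, $K_R \subseteq K_{R'}$ whenever $R \le R'$, and each $K_R$ is compact by the remark recorded right after the definition of Kakeya needle sets (it is the image of a continuous map out of the compact total space $\{(L,t) : L \in \mathbb{R}P^{n-1},\ t \in I_R(L)\}$), hence in particular Lebesgue measurable.

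Next I would verify that $\bigcup_{m=1}^{\infty} K_m = \mathbb{R}^n$. Given $y \in \mathbb{R}^n$, Theorem~\ref{thm: Rn-unoriented} produces some $L \in \mathbb{R}P^{n-1}$ and a unit vector $v$ spanning $L$ with $y = \sigma(L) + t v$ for some $t \in \mathbb{R}$; then $y \in \sigma(L) + I_R(L)$ as soon as $R \ge 2|t|$, so $y \in K_m$ for every integer $m \ge 2|t|$. Thus $\mathbb{R}^n$ is exhausted by the countable increasing family of measurable sets $K_1 \subseteq K_2 \subseteq \cdots$.

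Finally, if $K_R$ had Lebesgue measure zero for every $R$ — equivalently, if $K_m$ were null for each positive integer $m$ — then countable subadditivity of Lebesgue measure would force $\mathbb{R}^n = \bigcup_{m \ge 1} K_m$ to be a null set, which is absurd. Hence $K_R$ has positive Lebesgue measure for some $R$ (indeed for all sufficiently large $R$), which is the assertion. There is essentially no obstacle in this argument; the only points needing a word of care are the measurability of each elongated needle set, which is precisely the compactness observation already in the text, and the elementary bookkeeping showing that the finite elongations exhaust $|\sigma|$.
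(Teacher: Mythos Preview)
Your argument is correct and follows the same route as the paper's own proof: the paper simply observes that $|\sigma|=\mathbb{R}^n$ is the nested union of the $N$-elongations and says the result follows immediately, while you spell out the measurability (via compactness) and the countable subadditivity step that makes ``follows immediately'' precise.
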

\begin{proof}
$|\sigma |=\mathbb{R}^n$ is the nested union of the $N$-elongations of the corresponding Kakeya needle set, where $N \in \mathbb{Z}_+$.
The result follows immediately.
\end{proof}

We now shift gears and consider oriented configurations $\sigma: S^{n-1} \to \mathbb{R}^n$. 
Notice that the trivial vector bundle $\epsilon^n$ given by $\pi: S^{n-1} \times \mathbb{R}^n \to S^{n-1}$ 
has each fiber over a point $x \in S^{n-1}$ split into the normal line at $x$ and the tangent plane of the sphere at $x$. 
Since the normal bundle of the embedding of $S^{n-1}$ in $\mathbb{R}^n$ is trivial this gives 
$\epsilon^n = \epsilon^1 \oplus \tau$ where $\tau$ is the tangent bundle of the sphere $S^{n-1}$.
We may once again consider the orthogonal projection $P: \mathbb{R}^n \to T_x(S^{n-1})$ and note that 
the function $x \to P(\sigma(x))$ is a continuous section of the tangent bundle of the sphere, i.e., a continuous vector field on the sphere.

In this case the Hairy Ball Theorem says that every continuous vector field on an even dimensional sphere, has a zero (whereas the odd dimensional spheres 
have non vanishing continuous vector fields). We will not prove this theorem as the proof is found in every first year algebraic topology textbook (see \cite{Mu2}, \cite{Bre}, \cite{Ha}) and is well-known in general. Using this we prove the oriented versions of our theorems:

\begin{thm} Let $\sigma: S^{2n} \to \mathbb{R}^{2n+1}$ be a continuous oriented Kakeya line configuration. 
Then $|\sigma| = \mathbb{R}^{2n+1}$. This theorem fails in $\mathbb{R}^{2n}$ even for continuous sections. 
\label{thm: Rn-oriented}
\end{thm}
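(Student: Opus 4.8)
The plan is to reduce the positive statement to Lemma~\ref{lem: containzero}'s argument using the Hairy Ball Theorem in place of Lemma~\ref{lem: zero}, and then handle the negative statement by an explicit construction. For the positive part, given $\sigma: S^{2n} \to \mathbb{R}^{2n+1}$, first I would fix $\hat{x} \in \mathbb{R}^{2n+1}$ and, exactly as in the proof of Theorem~\ref{thm: Rn-unoriented}, replace $\sigma$ by $T_{-\hat{x}} \circ \sigma$, which is again a continuous oriented Kakeya line configuration with $|T_{-\hat{x}} \circ \sigma| = |\sigma| - \hat{x}$; so it suffices to show $\hat{0} \in |\sigma|$ for every such $\sigma$. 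For that, for each $x \in S^{2n}$ let $P_x: \mathbb{R}^{2n+1} \to T_x(S^{2n})$ be orthogonal projection onto the tangent plane at $x$ (equivalently, subtract off the component of $\sigma(x)$ along the radial line $L_x$). As noted in the excerpt, $x \mapsto P_x(\sigma(x))$ is a continuous section of the tangent bundle $\tau$ of $S^{2n}$, i.e.\ a continuous vector field, and since $S^{2n}$ is even-dimensional the Hairy Ball Theorem gives a point $x_0$ with $P_{x_0}(\sigma(x_0)) = 0$, meaning $\sigma(x_0) \in L_{x_0}$, so the whole oriented line $\sigma(x_0) + L_{x_0}$ passes through the origin and $\hat{0} \in |\sigma|$. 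One minor point to verify is the identity $|\sigma| = \cup_{x}(P_x(\sigma(x)) + L_x)$: this holds because translating the base point of an (un)oriented line along its own direction does not change the line as a set, and $\sigma(x) - P_x(\sigma(x))$ is exactly the radial component, which lies in $L_x$.

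For the negative statement in $\mathbb{R}^{2n}$, I would produce, for any prescribed bounded set $B$, a continuous $\sigma: S^{2n-1} \to \mathbb{R}^{2n}$ with $|\sigma|$ disjoint from $B$. The idea is to exploit the nonvanishing vector field on the odd sphere $S^{2n-1}$: let $V: S^{2n-1} \to \mathbb{R}^{2n}$ be a continuous tangent vector field with $V(x) \neq 0$ for all $x$ (e.g.\ the standard one pairing coordinates, $V(x_1,\dots,x_{2n}) = (-x_2,x_1,-x_4,x_3,\dots)$), and set $\sigma(x) = \lambda\, V(x)/\|V(x)\|$ for a large constant $\lambda$. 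Then $\sigma(x)$ is orthogonal to $L_x$ and has norm $\lambda$, so the line $\sigma(x) + L_x$ lies at distance exactly $\lambda$ from the origin; choosing $\lambda$ larger than $\sup_{b \in B}\|b\|$ makes $|\sigma| = \cup_x (\sigma(x) + L_x)$ disjoint from $B$. (Here $\|V(x)\|$ is constant equal to $1$ for the standard field, so one may simply take $\sigma = \lambda V$ and skip the normalization.) I should double-check that this $\sigma$ is a genuine oriented configuration in the sense of the definition — it is just a continuous map $S^{2n-1} \to \mathbb{R}^{2n}$, so there is nothing to check beyond continuity, which is clear.

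The one place that deserves care, and which I expect to be the main (mild) obstacle, is making precise why a nonvanishing \emph{tangent} field forces $|\sigma|$ to avoid a neighborhood of the origin: it is essential that $\sigma(x)$ be orthogonal to $L_x$ so that the displaced line does not curl back toward the origin, and that the orthogonal displacement have norm bounded below (here exactly $\lambda$). If one instead only knew $\sigma(x) \ne 0$ without the orthogonality, the line $\sigma(x)+L_x$ could still pass through $0$. So the construction genuinely uses both properties of $V$: nonvanishing (to keep $\sigma$ away from $0$) and tangency (to keep the lines away from $0$), and it is precisely the parity $2n$ that allows such a $V$ to exist. Everything else is a direct transcription of the unoriented arguments with the Hairy Ball Theorem substituted for the Stiefel--Whitney class computation of Lemma~\ref{lem: zero}.
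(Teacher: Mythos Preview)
Your proposal is correct and follows essentially the same route as the paper: project $\sigma$ onto the tangent bundle of the sphere, invoke the Hairy Ball Theorem to find a zero and hence a line through the origin, then apply the translation trick; for the even-dimensional counterexample, scale a nowhere-vanishing tangent field on $S^{2n-1}$ to push all lines beyond a prescribed radius. The only difference is cosmetic ordering (you front-load the translation reduction) and a bit of added detail in the counterexample, but the argument is the paper's.
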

\begin{proof}
Let $p \circ \sigma$ denote the corresponding continuous section of the tangent bundle of $S^{2n}$. By the Hairy Ball Theorem, 
$p \circ \sigma$ has a zero. In other words, there exists a point $x \in S^{2n}$ such that $\sigma(x)$ is normal to the sphere and hence 
lies in the line that $x$ generates. Thus $|\sigma| = \cup_{y \in S^{2n}} (\sigma(y) + L_y)$ has $L_x$, the line through $x$ and the origin contained in 
$|\sigma|$.  Thus $\sigma$ contains the origin. From here the same trick with translations used in the unoriented case, shows that 
$|\sigma|= \mathbb{R}^{2n+1}$. 

To show that this fails for $\mathbb{R}^{2n}$ i.e., odd spheres, even with continuity, take any nowhere vanishing vector field 
on $S^{2n-1}$. For any $C > 0$, one can scale the vector field so that its norm at any point is greater than or equal to $C$.
Then it is not hard to see that $|\sigma|$ is a union of lines each with distance $\geq C$ from the origin. Thus $|\sigma|$ is disjoint 
from the open ball of radius $C$ about the origin. Since $C$ is arbitrary, we can find examples of continuous oriented Kakeya line configurations where $|\sigma|$ is disjoint from any prescribed bounded 
set in $\mathbb{R}^{2n}$. 

For an explicit example, the reader should consider the collection of (counter clockwise oriented) tangent lines to a standard circle of radius $C$ about the origin in $\mathbb{R}^2$. 
This collection has $|\sigma|$ equal to the complement of the open disk of radius $C$ and hence $|\sigma| \neq \mathbb{R}^2$. As we have shown, these examples only exist in even Euclidean dimensions and in the case of oriented configurations. This example does not count in the unoriented case 
as it has two parallel lines in each parallel class and so there is no function $\sigma: \mathbb{R}P^{1} \to \mathbb{R}^2$ corresponding to this motion 
in the unoriented case. 

\end{proof}

It follows from this theorem, that any continuous oriented Kakeya needle set in $\mathbb{R}^{2n+1}$ has an $R$-elongation with positive Lebesgue measure just like in the unoriented case.

\section{Continuous Kakeya Line Configurations in connected Lie groups}

Let $G$ be a connected Lie group and $\mathfrak{g}$ its Lie algebra. Recall that the Lie subalgebras of $\mathfrak{g}$ are in bijective correspondence 
with the connected subgroups of $G$. (See \cite{Wa}). The $1$-dimensional subspaces of $\mathfrak{g}$ are automatically Lie subalgebras and 
are in bijective correspondence (exponential-logarithm correspondence) with the $1$-parameter subgroups $H$ of $G$. In the case of $G=(\mathbb{R}^n, +)$, these are exactly the lines 
through the origin. For any $x \in G$, $xH$ is a left coset of $H$ which will serve as a "parallel" copy of $H$ going through the point $x$.

Thus in the setting of connected Lie groups, the left cosets of $1$-parameter subgroups will play the role that lines did in $\mathbb{R}^n$. 
$P(\mathfrak{g})$, the projective space of the vector space $\mathfrak{g}$, then serves as the space of "parallel-classes" i.e., the space 
of $1$-parameter subgroups of $G$. For $L \in P(\mathfrak{g})$, we will denote the corresponding $1$-parameter group by $\exp(L)$. These constructions have many analogies with the $\mathbb{R}^n$-case (and indeed reduce to it 
when $G=(\mathbb{R}^n,+))$: \\
(1) Through any point $x$ in $G$, and $L \in P(\mathfrak{g})$, there is a unique left coset ("parallel line") $x \exp(L)$ which goes through $x$. \\
(2) "Parallel lines" do not meet. \\
(3) For each $L \in \mathfrak{g}$, there is a smooth surjective homomorphism $$(\mathbb{R},+) \to \exp(L) \subseteq G.$$ However the reader is warned 
that even though this map is an immersion, the image can be a circle group (map need not be injective) and the image does not have to be closed in $G$ (for example a dense line in a torus). 

Similarly fixing a positive definite inner product on $\mathfrak{g}$, the sphere $S(\mathfrak{g})$ functions as the space of oriented one-parameter 
subgroups of $G$, where the orientation is induced by moving outward from $0$ to the point $x \in S(\mathfrak{g})$ in the corresponding one-parameter 
subgroup of $G$. 

We now make definitions that generalize those in $(\mathbb{R}^n, +)$.

\begin{defn} Let $G$ be a connected Lie group and $\mathfrak{g}$ be its Lie algebra. Fix a positive definite inner product on $\mathfrak{g}$ also. A continuous unoriented Kakeya line configuration in $G$ 
is a continuous map $\sigma: P(\mathfrak{g}) \to G$. The underlying space of $\sigma$, $|\sigma|$ is defined as
$$
|\sigma| = \cup_{L \in P(\mathfrak{g})} (\sigma(L) \star \exp(L))
$$
where $\star$ denotes multiplication in $G$. The associated unoriented Kakeya needle set is 
$|\hat{\sigma}| = \cup_{L \in P(\mathfrak{g})} (\sigma(L) \star \exp(I(L)))$ where $I(L)$ is the interval of length $1$ centered around the origin in $L$. 
For $1 \leq R < \infty$, the $R$-elongation of the Kakeya needle set is the set obtained when $I(L)$ is replaced with an interval of length $R$ 
centered about the origin in $L$, in the last definition.

We define similar oriented versions of these concepts where a continuous oriented Kakeya line configuration in $G$ is given by a continuous map 
$\sigma: S(\mathfrak{g}) \to G$, where $S(\mathfrak{g})$ is the unit-sphere in $\mathfrak{g}$. 
\end{defn}

A difficulty one first encounters in a general Lie group is that the underlying space $|\sigma|$ of a configuration $\sigma$ lives in $G$ while the direction space 
$P(\mathfrak{g})$ lives in the Lie algebra $\mathfrak{g}$. To help deal with this we make use of the exponential map 
$\exp: \mathfrak{g} \to G$ and make the following definition:

\begin{defn} A continuous unoriented Kakeya line configuration $\sigma: P(\mathfrak{g}) \to G$ is called {\bf linear} if there exists a 
continuous lift $\mu: P(\mathfrak{g}) \to \mathfrak{g}$ such that $\exp \circ \mu = \sigma$. 
We make a similar definition in the oriented case where $S(\mathfrak{g})$ replaces $P(\mathfrak{g})$. 
\end{defn}

The exponential map is a smooth map from $\mathfrak{g}$ to $G$ but it need not be surjective in general even for connected Lie groups. 
(For example for the connected Lie group $G=SL(2,\mathbb{R})$ the exponential map is not surjective. ) Thus not every continuous Kakeya line 
configuration in $G$ will be linear, for example if the image of $\sigma$ does not lie in the image of the exponential map, there can be no linearizing lift. 

However as the derivative of the exponential map at $0 \in \mathfrak{g}$ 
is the identity map $\mathfrak{g} \to \mathfrak{g}$, the exponential induces a diffeomorphism from an open neighborhood of $0$ in $\mathfrak{g}$ 
to an open neighborhood of the identity element $e$ in $G$.  We make the following definition: 

\begin{defn} A continuous unoriented Kakeya line configuration $\sigma: P(\mathfrak{g}) \to G$ is called {\bf taut} if 
the image of $\sigma$ lies in an open neighborhood of $e$ in $G$ which is diffeomorphic to an open neighborhood of $0$ in $\mathfrak{g}$ 
via (the local inverse of) the exponential map. We make similar definitions in the oriented case. Note such configurations are automatically linear. 
\end{defn}

Thus every connected Lie group has a lot of taut configurations though in general the inclusions
$$
\text{ Taut configurations} \subseteq \text{ Linear configurations} \subseteq \text{ Continuous configurations }
$$
are all proper. In $(\mathbb{R}^n,+)$ these 3 notions coincide and we will see that in connected nilpotent Lie groups of dimension $> 2$, 
that every continuous configuration is linear. 

First we prove analogs of our results in $\mathbb{R}^n$ with restrictions:

\begin{thm}[Linear Configurations] Let $G$ be a connected Lie group and let $\sigma: P(\mathfrak{g}) \to G$ be a linear unoriented configuration.
Then $e \in |\sigma|$. A similar result holds for linear oriented configurations as long as the dimension of $G$ is odd. 
\label{thm: linear case}
\end{thm}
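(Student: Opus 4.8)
The plan is to reduce the linear case to the Euclidean results already proved (Lemma~\ref{lem: containzero} and Theorem~\ref{thm: Rn-oriented}) by pushing the configuration back into the Lie algebra along the linearizing lift. Suppose $\sigma = \exp \circ \mu$ for a continuous lift $\mu: P(\mathfrak{g}) \to \mathfrak{g}$. The key observation is that for each $L \in P(\mathfrak{g})$, the coset $\sigma(L)\star\exp(L) = \exp(\mu(L))\star\exp(L)$ contains the identity $e$ precisely when the one-parameter subgroup $\exp(L)$ passes through $\exp(-\mu(L))$, and the cleanest sufficient condition for this is $\mu(L) \in L$ (as a line in $\mathfrak{g}$): then $\exp(\mu(L)) \in \exp(L)$, so $\exp(\mu(L))\star\exp(L) = \exp(L) \ni e$ since $\exp(L)$ is a subgroup. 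So it suffices to find a single line $L$ with $\mu(L) \in L$.

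Now I would run exactly the argument of Lemma~\ref{lem: containzero} but with $\mu$ in place of $\sigma$. Fix a positive definite inner product on $\mathfrak{g}$ (the one already chosen). For $L \in P(\mathfrak{g})$, let $P_L: \mathfrak{g} \to L^\perp$ be orthogonal projection. Then $L \mapsto P_L(\mu(L))$ is a continuous section of the bundle $\gamma^\perp$ over $P(\mathfrak{g}) = \mathbb{R}P^{n-1}$ where $n = \dime \mathfrak{g}$ — this is the same verification as in Lemma~\ref{lem: containzero}, using continuity of $\mu$ and of the projections. By Lemma~\ref{lem: zero}, this section has a zero: there is $L_0$ with $P_{L_0}(\mu(L_0)) = 0$, i.e.\ $\mu(L_0) \in L_0$. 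By the previous paragraph, $e \in \exp(L_0) \subseteq |\sigma|$. For the oriented case, I replace $P(\mathfrak{g})$ by $S(\mathfrak{g}) = S^{n-1}$, form the continuous vector field $x \mapsto P_x(\mu(x))$ on the sphere (projection onto the tangent space $T_x S^{n-1}$, as in the discussion preceding Theorem~\ref{thm: Rn-oriented}), and invoke the Hairy Ball Theorem: when $n = \dime G$ is odd, $S^{n-1}$ is an even-dimensional sphere, so this field vanishes somewhere, giving $x_0$ with $\mu(x_0)$ normal to the sphere, hence $\mu(x_0) \in L_{x_0}$, hence $e \in \exp(L_{x_0}) \subseteq |\sigma|$.

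The main thing to be careful about — the one genuine subtlety rather than routine bookkeeping — is the implication ``$\mu(L) \in L \Rightarrow e \in \sigma(L)\star\exp(L)$'' and the passage from lines in $\mathfrak{g}$ to one-parameter subgroups in $G$. Here one must use that $L$, being one-dimensional, is a Lie subalgebra, that $\exp(L)$ is by definition the corresponding connected subgroup, and that $\exp$ restricted to the line $L$ is a homomorphism $(\mathbb{R},+) \to \exp(L)$ onto this subgroup; hence $\exp(tv) \in \exp(L)$ for all $t$ when $v$ spans $L$, and in particular $\exp(\mu(L)) \in \exp(L)$. Multiplying the subgroup $\exp(L)$ on the left by one of its own elements returns $\exp(L)$, which contains $e$. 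One should also note the argument only locates $e$ in $|\sigma|$ and does not upgrade to an open neighborhood — that is the business of the ``taut'' refinement handled separately — so no translation trick is available in a general non-abelian $G$, and the theorem correctly claims only $e \in |\sigma|$. Everything else (continuity of the sections, applicability of Lemma~\ref{lem: zero} and the Hairy Ball Theorem with the dimension matched so that $S(\mathfrak{g})$ is even-dimensional iff $\dime G$ is odd) is a direct transcription of the $\mathbb{R}^n$ proofs.
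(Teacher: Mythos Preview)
Your proof is correct and follows essentially the same approach as the paper: lift $\sigma$ to $\mu$, use the $\gamma^\perp$-section argument (Lemma~\ref{lem: zero} via Lemma~\ref{lem: containzero}) or the Hairy Ball Theorem to find $L_0$ with $\mu(L_0)\in L_0$, and conclude $e\in\exp(L_0)\subseteq|\sigma|$. If anything, you are more careful than the paper in spelling out why $\mu(L_0)\in L_0$ forces $\sigma(L_0)\in\exp(L_0)$ and in matching the parity of the sphere's dimension to $\dime G$.
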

\begin{proof}
Let $\mu: P(\mathfrak{g}) \to \mathfrak{g}$ denote a continuous lift of $\sigma$. By the proof of Theorem~\ref{thm: Rn-unoriented}, we find that there must exist 
a line $L_0\in P(\mathfrak{g})$ such that $\mu(L_0) \in L_0$. Thus $\sigma(L_0) \in \exp(L_0)$. 
This in turn implies $\exp(L_0) \subseteq \cup_{L \in P(\mathfrak{g})} (\sigma(L) \star \exp(L))=|\sigma|$.  As $e \in \exp(L_0)$, we are done. 
The proof of the oriented case proceeds similarly using the proof of Theorem~\ref{thm: Rn-oriented} which imposes the restriction that the dimension of 
$\mathfrak{g}$ and hence of $G$ is odd.
\end{proof}

The problem now is that the translation trick that worked in $(\mathbb{R}^n,+)$ does not necessarily work in a general Lie group as left translating (in $G$) a 
linear ("liftable") configuration need not yield a configuration which is also linear. One could left translate in the Lie algebra $\mathfrak{g}$ but this 
does not work well with respect to the exponential correspondence in general. 
For example
$$
e^{\mathbb{A} + t\mathbb{B}} \neq e^{\mathbb{A}} e^{t\mathbb{B}}
$$
when the matrices $\mathbb{A}$ and $\mathbb{B}$ don't commute as one can readily check using the Baker-Campbell-Hausdorff identity.

Thus in a general connected Lie group we will settle for a partial result (though in nilpotent Lie groups we'll see we can do better!):

\begin{thm}[Taut Configurations] Let $G$ be a connected Lie group and let $\sigma: P(\mathfrak{g}) \to G$ be a taut unoriented configuration. 
Then $|\sigma|$ contains an open neighborhood of the identity and hence has positive Haar measure. A similar result holds for 
taut oriented configurations as long as the dimension of $G$ is odd.
\end{thm}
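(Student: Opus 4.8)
The plan is to reduce the taut case to the linear case by a local translation argument that works inside the chosen exponential chart. Fix the open neighborhood $U$ of $e$ in $G$ that is diffeomorphic to an open neighborhood $V$ of $0$ in $\mathfrak{g}$ via the local inverse $\log$ of the exponential map, and assume the image of $\sigma$ lies in $U$. By Theorem~\ref{thm: linear case} (using that $\sigma$ is taut, hence linear), we know $e \in |\sigma|$: there is a line $L_0 \in P(\mathfrak{g})$ with $\sigma(L_0) \in \exp(L_0)$, so the whole one-parameter subgroup $\exp(L_0)$ is contained in $|\sigma|$. The goal is to upgrade ``$|\sigma|$ contains $e$'' to ``$|\sigma|$ contains an open neighborhood of $e$''; once we have an open set, positive left Haar measure is automatic since Haar measure is positive on nonempty open sets.

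The key step is a perturbation argument. First I would shrink attention: choose a smaller open ball $B \subseteq V$ around $0$ and let $W = \exp(B)$, arranging by continuity of $\sigma$ that $\sigma(P(\mathfrak{g}))$ lies in a compact subset of $U$ well inside $W$ after possibly enlarging $W$. For a target point $g \in G$ close enough to $e$, write $g = \exp(v)$ with $v$ small. I want to produce a linear Kakeya configuration $\sigma_g$ with $g \in |\sigma_g|$ \emph{and} $|\sigma_g| \subseteq |\sigma|$ translated appropriately — but the cleaner route is the reverse: show directly that $g \in |\sigma|$ for all $g$ in a neighborhood of $e$. For this, consider the map $\mu : P(\mathfrak{g}) \to \mathfrak{g}$ lifting $\sigma$, and for each small $v \in \mathfrak{g}$ define a new lift $\mu_v(L) = \mu(L) \ominus_L v$, where $\ominus_L$ denotes subtracting off the component that would place $g$ on the translated line; more precisely, I would run the orthogonal-projection argument from Lemma~\ref{lem: containzero} but relative to the point $g$: the map $L \mapsto P_L\big(\log(g^{-1}\sigma(L))\big)$ (well-defined for $g$ near $e$ since then $g^{-1}\sigma(L) \in U$) is a continuous section of $\gamma^{\perp}$, hence vanishes at some $L_1$, which forces $\log(g^{-1}\sigma(L_1)) \in L_1$, i.e. $g^{-1}\sigma(L_1) \in \exp(L_1)$, i.e. $g \in \sigma(L_1)\exp(L_1) \subseteq |\sigma|$.

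The main obstacle — and the point requiring care — is ensuring $g^{-1}\sigma(L) \in U$ uniformly in $L$ so that $\log(g^{-1}\sigma(L))$ makes sense and depends continuously on $(g, L)$. This is where compactness of $P(\mathfrak{g})$ is essential: $\sigma(P(\mathfrak{g}))$ is compact inside the open set $U$, so there is a uniform ``room'' $\delta > 0$, and by continuity of group multiplication there is a neighborhood $N$ of $e$ with $N^{-1}\sigma(P(\mathfrak{g})) \subseteq U$. For $g \in N$ the section above is defined and continuous (composition of continuous maps: $g^{-1}\sigma(L)$, then $\log$, then the smoothly varying orthogonal projection $P_L$), so Lemma~\ref{lem: zero} applies verbatim. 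Hence $N \subseteq |\sigma|$, giving the open neighborhood of $e$, and positivity of Haar measure follows. The oriented case is identical except that the section of $\gamma^{\perp}$ is replaced by a continuous vector field on $S(\mathfrak{g})$ and Lemma~\ref{lem: zero} is replaced by the Hairy Ball Theorem, which needs $\dim \mathfrak{g} = \dim G$ odd so that $S(\mathfrak{g})$ is an even-dimensional sphere.
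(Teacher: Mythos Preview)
Your proof is correct and follows essentially the same approach as the paper: use compactness of $\sigma(P(\mathfrak{g}))$ inside $U$ together with continuity of multiplication (the paper invokes the tube lemma explicitly) to find a neighborhood $N$ of $e$ such that for every $g \in N$ the left-translated configuration $T_{g^{-1}}\circ\sigma$ still has image in $U$, then apply the linear-case result to each such translate to conclude $g \in |\sigma|$. The only cosmetic difference is that you unpack the proof of Theorem~\ref{thm: linear case} inline (building the section $L \mapsto P_L(\log(g^{-1}\sigma(L)))$ of $\gamma^{\perp}$ and invoking Lemma~\ref{lem: zero} directly), whereas the paper simply observes that $T_{g^{-1}}\circ\sigma$ is taut and cites Theorem~\ref{thm: linear case} as a black box.
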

\begin{proof}
Let $\sigma$ be a taut configuration, as projective spaces (respectively spheres) are compact, the image of $\sigma$ is a compact subset 
of an open neighborhood $U$ of the identity (which is diffeomorphic to an open neighborhood of $0$ in $\mathfrak{g}$ under a local inverse of the exponential).
As multiplication $M: G \times G \to G$ is continuous, $M^{-1}(U)$ is an open neighborhood of $e \times Image(\sigma)$ in $G \times G$.
By the tube lemma (see \cite{Mu}), one has an open neighborhood $V$ of $e$ such that $V \times Image(\sigma) \subseteq M^{-1}(U)$. 
Let $W = V \cap V^{-1}$ where $V^{-1}=\{ x^{-1} | x \in V \}$ then $W$ is an open neighborhood of $e$ in $G$ such that for 
$w \in W$, with left translation operator $T_{w}$ we have $T_w \circ \sigma$ is still a continuous configuration with image in $U$ and hence is still taut.

Hence $|T_w \circ \sigma|= w \star |\sigma|$ contains $e$ by Theorem~\ref{thm: linear case}. 
Thus $|\sigma|$ contains $w^{-1}$ for all $w \in W$ and hence $W \subseteq |\sigma|$ which proves the theorem in the unoriented case.
The proof of the oriented case is similar and left to the reader.

\end{proof}

In general, in order to get stronger results in a connected Lie group, we have to look at the exponential map carefully. 
This is a well-studied subject. If $E$ is the image of the exponential map, it is a basic fact that $E$ generates the connected Lie group $G$ as a group. 
In fact $E \star E = G$ (see \cite{MoS} ). However in general, as already remarked $E \neq G$. 
It is known that $E=G$ in the case of compact connected Lie groups, connected nilpotent groups and connected solvable Lie groups of type $E$.
(see \cite{MoS}). 

Of particular use for us is a result of Dixmier and Saito (see \cite{Di} and \cite{Sa}) which states that for simply-connected solvable groups, the exponential 
map is surjective iff it is bijective iff it is a global diffeomorphism.  These in turn are shown to be equivalent to a condition that in the adjoint representation 
of the Lie algebra, no nontrivial purely imaginary roots exist (Solvable Lie algebras of type $E$). 
In particular this applies to the case of nilpotent Lie algebras. Thus in a simply-connected nilpotent Lie group, the exponential map is a diffeomorphism 
$\mathfrak{g} \to G$ just as in the case $G=(\mathbb{R}^n, +)$. Here there is then no difference between the notions of taut, linear and continuous 
configurations.
Since a connected nilpotent Lie group (or more generally a connected solvable Lie group of type $E$) is covered by a simply-connected one, and since the exponential maps commute with covering maps, 
it follows that that $exp: \mathfrak{g} \to G$ is a covering map. Basic covering space theory then 
says that a continuous Kakeya line configuration $\sigma: P(\mathfrak{g}) \to G$ has a continuous lift, i.e., is linear if and only if 
$\sigma_*$, the induced homomorphism of fundamental groups, is trivial.

The fundamental group $\pi_1(G)$ (we will omit base points as we are discussing path-connected spaces) is Abelian as Lie groups are $H$-spaces 
and furthermore the fundamental group of a nilpotent connected Lie group (or more generally a connected solvable Lie group of type $E$) acts freely on its universal cover $\mathbb{R}^n$ which implies it is torsion free. (See \cite{Br})
On the other hand, $\pi_1(P(\mathfrak{g}))$ is isomorphic to the cyclic group of order two when $n > 2$, the infinite cyclic group when $n=2$ and 
is trivial when $n \leq 1$. Thus $\sigma_*$ must be trivial as long as $n \neq 2$ as there is no nontrivial group homomorphism from a torsion group 
to a torsion-free group.

We have thus proved:

\begin{thm}
If $G$ is a connected nilpotent Lie group (or more generally a connected solvable Lie group of type $E$) of dimension $> 2$, then every continuous unoriented or oriented Kayeka configuration $\sigma$ is linear.
Furthermore $|\sigma| = G$ in the unoriented case and also in the oriented case when the dimension of $G$ is odd.
\label{thm: nilpotent}
\end{thm}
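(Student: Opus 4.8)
The plan is to assemble the pieces indicated in the discussion preceding the theorem into three moves: first identify $\exp\colon\mathfrak{g}\to G$ as the universal covering of $G$, then apply the standard lifting criterion of covering space theory to conclude that every continuous configuration is linear, and finally combine linearity with a left-translation argument and Theorem~\ref{thm: linear case} to upgrade $e\in|\sigma|$ to $|\sigma|=G$.

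For the first move I would invoke Dixmier--Saito. Let $\tilde G$ be the simply-connected Lie group with Lie algebra $\mathfrak{g}$; it is again nilpotent (resp.\ solvable of type $E$), so $\exp_{\tilde G}\colon\mathfrak{g}\to\tilde G$ is a diffeomorphism. Since $\exp$ is natural with respect to Lie group homomorphisms, the covering projection $p\colon\tilde G\to G$ satisfies $p\circ\exp_{\tilde G}=\exp_G$, so $\exp_G$ is a covering map whose total space $\mathfrak{g}\cong\mathbb{R}^n$ is simply connected; hence $\exp_G$ is the universal covering of $G$.

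For the second move, recall that a continuous map from a connected, locally path-connected space $X$ into $G$ lifts through the covering $\exp_G$ if and only if the image of $\pi_1(X)$ under the induced homomorphism is trivial in $\pi_1(G)$. Here $X=P(\mathfrak{g})=\mathbb{R}P^{n-1}$ or $X=S(\mathfrak{g})=S^{n-1}$, each a smooth manifold hence locally path-connected. Because $\dim G=n>2$ we have $n\geq 3$, so $\pi_1(S^{n-1})=0$ (nothing to check in the oriented case) and $\pi_1(\mathbb{R}P^{n-1})\cong\mathbb{Z}/2$. On the other hand $\pi_1(G)$ is abelian (as $G$ is an $H$-space) and torsion-free, since it acts freely on the contractible manifold $\mathfrak{g}$ and a nontrivial finite group cannot do so (cf.\ \cite{Br}). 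A homomorphism from the finite group $\mathbb{Z}/2$ to a torsion-free group is trivial, so $\sigma_*=0$ in both cases and $\sigma$ lifts: every continuous configuration is linear. This is exactly where $n=2$ must be excluded — for $G=T^2$ a configuration $\mathbb{R}P^1=S^1\to T^2$ winding around a nontrivial homotopy class does not lift. For the last move, fix a continuous configuration $\sigma$ and any $g\in G$. The left translate $L_g\circ\sigma$ is again a continuous configuration, hence linear by what we just proved, so Theorem~\ref{thm: linear case} (its unoriented form, or its oriented form when $\dim G$ is odd) gives $e\in|L_g\circ\sigma|=g\cdot|\sigma|$, i.e.\ $g^{-1}\in|\sigma|$. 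Letting $g$ range over $G$ yields $|\sigma|=G$.

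The main obstacle is really the input from structure theory rather than the topology: one must be comfortable asserting that $\exp$ is a covering map for these groups (via Dixmier--Saito in the simply-connected case together with naturality of $\exp$) and that $\pi_1$ of such a group is torsion-free. Once those two facts are in hand, the lifting criterion, the fundamental-group computations, and the translation trick are all routine and parallel the $\mathbb{R}^n$ arguments of the previous section; the only point requiring a little care is keeping track of the dimension hypotheses ($n>2$ for linearity, and additionally $\dim G$ odd in the oriented conclusion).
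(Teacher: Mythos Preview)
Your proposal is correct and follows essentially the same route as the paper: Dixmier--Saito plus naturality of $\exp$ to see that $\exp_G$ is a covering, the lifting criterion together with $\pi_1(\mathbb{R}P^{n-1})\cong\mathbb{Z}/2$ versus $\pi_1(G)$ torsion-free to get linearity for $n>2$, and then Theorem~\ref{thm: linear case} combined with the translation trick. The only cosmetic difference is that you explicitly note $\exp_G$ is the \emph{universal} cover and spell out that each translate $L_g\circ\sigma$ is itself linear before invoking Theorem~\ref{thm: linear case}; the paper leaves both points implicit.
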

\begin{proof}
As explained in the paragraph preceding this theorem, any continuous unoriented configuration is linear (lifts) as long as we are in dimensions $> 2$.
In the case of oriented configurations, the statement is still true as spheres in real vector spaces of dimension $>2$ are simply connected. 
Thus by Theorem~\ref{thm: linear case}, we conclude $e \in |\sigma|$ for {\bf any} unoriented configuration or oriented configuration in odd dimensions.
Now we can play the translation trick as for any $T_{x}$, left translation operator, $T_{x} \circ \sigma$ is still a configuration and 
so $|T_{x} \circ \sigma| = x \star |\sigma|$ contains $e$ for all $x \in G$ and hence $|\sigma|$ contains $x^{-1}$ for all $x \in G$ from which it follows 
that $|\sigma| = G$.
\end{proof}

Theorem~\ref{thm: nilpotent} does not fully hold in dimension 2 as there exist configurations which do not lift to the Lie algebra, i.e., that are not linear. 
However it turns out that it is still true that $|\sigma|=G$ for continuous unoriented Kakeya configurations. We now seek to remove the dimension $>2$ constraint 
in this regard for theorem~\ref{thm: nilpotent}. The only nontrivial case to prove it for is the case of dimension $2$.

In dimension $2$, the only connected Lie groups up to isomorphism are $(\mathbb{R}^2,+)$, the $2$-torus $S^1 \times S^1$, cylinder $S^1 \times \mathbb{R}$ and 
the affine group $\{ \begin{bmatrix} a & b \\ 0 & 1 \end{bmatrix} | a > 0, b \in \mathbb{R} \}$ which is nonAbelian but solvable. This is because there 
are only two $2$-dimensional Lie algebras, the abelian one, which corresponds to simply connected group $(\mathbb{R}^2, +)$ and the nonAbelian one, which has simply connected Lie group equal  
to the affine group. As the affine group is centerless, it is the only connected Lie group with a nonabelian $2$-dimensional Lie algebra. 
Since the only discrete subgroups of $(\mathbb{R}^2,+)$ are free-abelian of rank $\leq 2$, it only covers cylinders and tori, and so the above list is complete.
(Any rank $2$ lattice in $\mathbb{R}^2$ is isomorphic to the standard one under an automorphism of $(\mathbb{R}^2,+)$ so all the tori and cylinders 
are isomorphic  to the standard ones.)

The exponential map is a diffeomorphism in the case of $(\mathbb{R}^2, +)$ and the affine group and so Theorem~\ref{thm: nilpotent} holds for them 
as every continuous unoriented Kakeya configuration is linear. 

If $\sigma: RP^1 \to S^1 \times S^1$ is a continuous unoriented Kakeya configuration in the Torus, the image of $\sigma_*: \pi_1(RP^1)=\mathbb{Z} \to \pi_1(S^1 \times S^1)=\mathbb{Z} \oplus \mathbb{Z}$ 
is free abelian of rank $\leq 1$. If the image of $\pi_*$ is trivial, then the same is true for any translate (they are homotopic) and both the configuration 
and its translates are linear and one can then readily prove $|\sigma|=G$ using the translation trick. Otherwise the image of $\pi_*$ is free abelian of rank 1. The corresponding 
covering group of $S^1 \times S^1$ is then a cylinder and the configuration and all its translates lift to this cylinder.

Thus to prove that $|\sigma|=G$ also holds for the torus $S^1 \times S^1$ it reduces to proving it for the cylinder $S^1 \times \mathbb{R}$. 

Thus to extend the part of Theorem~\ref{thm: nilpotent} which states that $|\sigma|=G$ so that it holds for all connected solvable Lie groups of type $E$, it remains only to prove it holds 
in the case $G=S^1 \times \mathbb{R} \cong \mathbb{C}^*$.  

Let $\sigma: RP^1 \to \mathbb{C}^*$ be continuous. We can and will view $\sigma$ as a continuous map $S^1 \to \mathbb{C}^*$ such that 
$\sigma(u)=\sigma(-u)$, i.e., we will consider the oriented Kakeya configuration which goes thru the same collection of lines twice, assigning opposite 
orientations on each run through. The underlying set of this oriented configuration is the same as its unoriented counterpart. Note $exp: \mathbb{C} \to \mathbb{C}^*$, the complex exponential, is the exponential map 
for the cylinder Lie group $\mathbb{C}^*$ and it is a covering map with kernel $K=\{ 2 \pi i n | n \in \mathbb{Z} \}$. Covering space 
theory says we can lift $\sigma$ to a path $p: [0, 2\pi] \to \mathbb{C}$, where at $p(t)$, is attached a parallel copy $L_t$ of a line of angle $t$ radians with 
respect to the $x$-axis, oriented outward from the origin. (As the Lie group $\mathbb{C}^*$ is abelian, the exponential map takes all lines, even those not through the origin, in $\mathbb{C}$ to cosets of one-parameter subgroups of 
$\mathbb{C}^*$.) To show the underlying set of any continuous unoriented Kakeya configuration in $\mathbb{C}^*$ contains the identity element $1$, it then is sufficient 
to show that $|p| = \cup_{t \in [0, 2\pi]} L_t$ contains some point of $K$. If we can show this, then every continuous unoriented Kakeya configuration in the cylinder $\mathbb{C}^*$ will contain the identity element $1$ and we can use the translation trick to show $|\sigma|=\mathbb{C^*}$ for all of them.
As mentioned before, this will complete the proof  that $|\sigma|=G$ for connected Lie groups in dimension 2. 

Thus let $p: [0, 2\pi] \to \mathbb{C}$ be a continuous map such that $L_t$ is a line that makes angle $t$ radians with the $x$-axis, going through the point $p(t)$.
We need to show $|p| = \cup_{t \in [0, 2\pi]} L_t$ contains some point of $K$.  Suppose this were not true, i.e., $|P| \cap K = \emptyset$. 

Define $H: [0, 2\pi] \times \mathbb{R} \to \mathbb{C}-K$ via $H(t,s) = p(t) + se^{it}$. Notice for fixed $t$, and varying $s$, $H(t,s)$ sweeps out the line $L_t$. 
Also notice that $Image(H)$ is exactly $|p|$ and that $H$ is continuous. Now notice that on each line of fixed $t$, the map $H$ is proper, 
thus it extends to a continuous map between one-point compactifications of $\mathbb{R}$ and $\mathbb{C}$ and gives continuous  
$\bar{H}: [0, 2 \pi] \times S^1 \to S^2-K$ where $S^2$ is the Riemann sphere.  Thus $\bar{H}$ gives a base point preserving homotopy 
of maps $h_t: S^1 \to S^2-K$ where the base points are the points at infinity. Notice as $h_{\frac{\pi}{2}}$ corresponds to a vertical line which is disjoint from 
$K$, it must be a line $x=c$ where $c \neq 0$, on the other hand $h_{0}$ corresponds to a horizontal line which is disjoint from $K$ and so 
it must be a line $y=d$ where $d \neq 2 \pi n$ for any integer $n$. Let $a$ and $b$ be points in $K$ that are below and above $d$ respectively.

We can then view $\bar{H}$ as a continuous base point preserving homotopy $[0, 2 \pi] \times S^1 \to S^2-\{a,b\}$. $S^2-\{a,b\}$ is homeomorphic to 
the punctured plane, $\mathbb{R}^2-\{ 0 \}$, under stereographic projection from the point $a \in S^2$, composed with a translation that ensures 
the point $b$ maps to $0$. 
Under this modified stereographic projection $\Psi$, the map $h_{\frac{\pi}{2}}$ clearly projects to a null-homotopic curve $S^1 \to \mathbb{R}^2-\{0\}$ while the map $h_{0}: S^1 \to \mathbb{R}^2-\{0\}$ projects to a curve with nonzero winding number about the origin. As these are homotopic via $\Psi \circ \bar{H}$, we 
achieve a contradiction. 

Thus no continuous line configuration $p: [0, 2\pi] \to \mathbb{C}$ can have $|p|$ disjoint from $K$ and so no continuous unoriented Kakeya configuration in $\mathbb{C}^*$ 
can have $|\sigma|$ not contain the identity element $1$. Thus every continuous unoriented Kakeya configuration in the cylinder, has underlying 
space which contains the identity. We are hence done proving  $|\sigma|=G$ in dimension 2 as mentioned earlier.

Thus we have proved:

\begin{cor} 
Let $G$ be a connected nilpotent Lie group (or more generally a connected solvable Lie group of type $E$) and let 
$\sigma$ be a continuous unoriented Kakeya configuration. Then $|\sigma|=G$. The same result holds in the 
oriented case when the dimension of $G$ is odd.
\end{cor}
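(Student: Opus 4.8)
The plan is to split by $\dime G$, reduce each case to something already established, and then finish uniformly with the left‑translation trick. In dimension $0$ or $1$ the statement is immediate (every one‑parameter subgroup is all of $G$, or $G$ is a point), and in dimension $>2$ it is precisely Theorem~\ref{thm: nilpotent}. Since the oriented statement only asserts anything when $\dime G$ is odd, the oriented case is therefore entirely covered by these remarks, and the only genuine work is the unoriented case in dimension $2$.

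For $\dime G = 2$ I would invoke the classification of connected Lie groups with a $2$‑dimensional Lie algebra: $(\mathbb{R}^2,+)$, the affine group, the cylinder $S^1\times\mathbb{R}\cong\mathbb{C}^*$, and the torus $S^1\times S^1$. For $(\mathbb{R}^2,+)$ and the affine group the exponential map is a diffeomorphism, so every configuration is linear and Theorem~\ref{thm: linear case} already gives $e\in|\sigma|$. For the torus, covering space theory reduces matters to the cylinder: the image of $\sigma_*\colon\pi_1(\mathbb{R}P^1)=\mathbb{Z}\to\pi_1(S^1\times S^1)=\mathbb{Z}\oplus\mathbb{Z}$ is free abelian of rank $\le 1$; if it is trivial then $\sigma$ and all its translates are linear and we conclude as before, and if it has rank $1$ then $\sigma$ and all its translates lift to the intermediate covering group, which is a cylinder.

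The heart of the argument, and the step I expect to be the main obstacle, is $G=\mathbb{C}^*$. I would regard an unoriented configuration $\sigma\colon\mathbb{R}P^1\to\mathbb{C}^*$ as an oriented one $S^1\to\mathbb{C}^*$ with $\sigma(u)=\sigma(-u)$, lift it through the covering $\exp\colon\mathbb{C}\to\mathbb{C}^*$ (kernel $K=2\pi i\mathbb{Z}$) to a family of parallel lines $L_t$ of angle $t$ attached along a path $p\colon[0,2\pi]\to\mathbb{C}$, and show $|p|=\bigcup_t L_t$ must meet $K$. Supposing not, $H(t,s)=p(t)+se^{it}$ lands in $\mathbb{C}-K$; since $H$ is proper in $s$ on each line it extends over one‑point compactifications to $\bar H\colon[0,2\pi]\times S^1\to S^2-K$ on the Riemann sphere, a basepoint‑preserving homotopy of loops $h_t\colon S^1\to S^2-K$. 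Choosing $a,b\in K$ straddling the horizontal loop $h_0$ (which is $y=d$ with $d\notin 2\pi\mathbb{Z}$) and projecting stereographically from $a$, then translating $b$ to $0$, the loop $h_{\pi/2}$ (a vertical line $x=c$ with $c\ne 0$) is null‑homotopic in $\mathbb{R}^2-\{0\}$ while $h_0$ has nonzero winding number about $0$; the homotopy $\bar H$ between them is the contradiction. Hence $1=e\in|\sigma|$ for the cylinder, and the torus follows.

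Finally, in every case we have $e\in|\sigma|$, and the translation trick closes the argument: for each $x\in G$, $T_x\circ\sigma$ is again a continuous (un)oriented Kakeya configuration with $|T_x\circ\sigma|=x\star|\sigma|$, so this set contains $e$, whence $x^{-1}\in|\sigma|$ for all $x\in G$ and therefore $|\sigma|=G$. The oriented case, where applicable, is handled identically.
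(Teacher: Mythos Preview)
Your proposal is correct and follows essentially the same route as the paper: the reduction to dimension~$2$ via Theorem~\ref{thm: nilpotent}, the classification of $2$-dimensional connected Lie groups, the covering-space reduction of the torus to the cylinder, and the winding-number/stereographic-projection contradiction on $\mathbb{C}^*$ are all exactly as in the text. The only cosmetic difference is that you collect the translation trick into a single closing step rather than invoking it case by case.
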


\section{Continuous Kakeya Line Configurations in homogeneous spaces}
\label{section: homogeneous}

Let $G$ be a connected Lie group and $K$ a closed subgroup. Then it is well known that the projection map 
$\pi: G \to K \backslash G$ gives a fiber bundle with fiber $K$ and that the homogeneous space $K \backslash G$ is a smooth manifold. (See \cite{Wa}). 
In fact, this fiber bundle is a principal $K$-bundle. The translates of one-parameter subgroups in $G$ map to a distinguished class of curves in 
$K \backslash G$. (These can be constant curve if the one-parameter subgroup lies in $K$ for example). 

\begin{defn}
A continuous unoriented $G$-Kakeya configuration in $X=K \backslash G$ is a continuous map 
$\sigma: P(\mathfrak{g}) \to X$. We define $|\sigma| = \cup_{L \in \mathfrak{g}} (\sigma(L) \star \exp(L))$ where $\star$ denotes the right action of $G$ on $X$. We say the configuration lifts to a configuration in $G$ if there is a continuous 
unoriented Kakeya configuration in $G$, $\hat{\sigma}: P(\mathfrak{g}) \to G$ such that $\pi \circ \hat{\sigma} = \sigma$ where $\pi: G \to K \backslash G$ is the 
quotient map.  
Notice in this case that $|\sigma| = \pi(|\hat{\sigma}|)$. 
We make similar definitions for oriented Kakeya configurations where $P(\mathfrak{g})$ is replaced with $S(\mathfrak{g})$ 
for some choice of positive definite inner product on $\mathfrak{g}$.
\end{defn}

Notice that as fiber bundles are Serre fibrations, the question of when a continuous map like $\sigma$ lifts is a homotopy question, i.e., if $\sigma_1$ is homotopic 
to $\sigma_2$ then $\sigma_1$ has a continuous lift if and only if $\sigma_2$ does. Thus in asking when a $G$-Kakeya configuration 
$\sigma$ in $X$ lifts to one in $G$, only the unbased homotopy class of $\sigma$ is relevant.

The following lemma follows immediately from this observation:

\begin{lem}
Let $\sigma: P(\mathfrak{g}) \to X=K \backslash G$ be a continuous unoriented $G$-Kakeya configuration in $X$ whose 
image set lies in a contractible subspace of $X$ (like a chart). Then $\sigma$ is homotopic to a constant map and hence has a continuous lift to a continuous unoriented 
Kakeya configuration in $G$. Similar statements hold for oriented configurations.
\end{lem}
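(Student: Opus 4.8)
The plan is to invoke the homotopy-invariance of the lifting problem that was just set up, namely that the fiber bundle $\pi\colon G\to K\backslash G$ is a Serre fibration, so that whether a map $\sigma\colon P(\mathfrak g)\to X$ lifts through $\pi$ depends only on the unbased homotopy class of $\sigma$. First I would observe that a contractible subspace $C\subseteq X$ comes with a homotopy $C\times[0,1]\to C$ from the identity to a constant map at some point $x_0\in C$; composing with the inclusion gives a homotopy $X$-valued homotopy $\sigma\simeq c_{x_0}$ rel nothing, where $c_{x_0}$ denotes the constant map at $x_0$. Hence $\sigma$ is (freely) homotopic to a constant map $P(\mathfrak g)\to X$.

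Next I would note that a constant map obviously lifts: pick any point $\hat x_0\in\pi^{-1}(x_0)$ and take the constant map $P(\mathfrak g)\to G$ at $\hat x_0$, which is continuous and satisfies $\pi\circ c_{\hat x_0}=c_{x_0}$. Then applying the homotopy lifting property of the Serre fibration $\pi$ to the homotopy $H\colon P(\mathfrak g)\times[0,1]\to X$ with $H_0=c_{x_0}$ (lifted by $c_{\hat x_0}$) and $H_1=\sigma$, we obtain a lift $\widetilde H\colon P(\mathfrak g)\times[0,1]\to G$ with $\widetilde H_0=c_{\hat x_0}$; setting $\hat\sigma:=\widetilde H_1$ gives a continuous map with $\pi\circ\hat\sigma=H_1=\sigma$. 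By definition $\hat\sigma$ is a continuous unoriented Kakeya configuration in $G$, and the remark following the definition shows $|\sigma|=\pi(|\hat\sigma|)$. The oriented case is identical, replacing $P(\mathfrak g)$ by $S(\mathfrak g)$ throughout; both $P(\mathfrak g)$ and $S(\mathfrak g)$ are CW complexes so the Serre fibration suffices (no need for a Hurewicz fibration).

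There is essentially no obstacle here: the only point requiring a sentence of care is that "image lies in a contractible subspace" must be used to produce a homotopy of $\sigma$ \emph{as a map into $X$} (not merely into the subspace), which is automatic since the contracting homotopy lands in $C\subseteq X$. So the write-up is just: contractible target $\Rightarrow$ $\sigma$ nullhomotopic in $X$ $\Rightarrow$ $\sigma$ lifts by homotopy invariance of lifting against the Serre fibration $\pi$, with the constant map as the obvious lift of the constant homotopy endpoint.
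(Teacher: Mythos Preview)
Your proof is correct and follows exactly the approach the paper intends: the paper states that the lemma ``follows immediately'' from the preceding observation that $\pi\colon G\to K\backslash G$ is a Serre fibration and hence liftability depends only on the unbased homotopy class of $\sigma$. You have simply spelled out the details---nullhomotopy via the contractible subspace, the trivial lift of a constant map, and the homotopy lifting property---that the paper leaves implicit.
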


In general a continuous oriented $G$-Kakeya configuration $\sigma: S(\mathfrak{g}) \to X=K \backslash G$ determines a 
unbased homotopy class in $[S^{n-1}, X]$ where $n$ is the dimension of $G$. As $X$ is path connected, this in turn determines a 
$\pi_1(X)$-orbit $C$ in $\pi_{n-1}(X)$ under the action of $\pi_1(X)$ on $\pi_*(X)$. The configuration lifts to a continuous oriented Kakeya configuration 
in $G$ if and only if some element in $C$ is in the image of $\pi_*: \pi_{n-1}(G) \to \pi_{n-1}(X)$ if and only if some element in $C$ is in the kernel 
of the boundary operator $\partial: \pi_{n-1}(X) \to \pi_{n-2}(K)$.  Using these observations and similar variants for projective space, the liftability 
question can be resolved in most examples. 

Once the configuration is lifted to the Lie group, all of our previous results apply. As an example we record the following corollary:

\begin{cor}
Let $G$ be a connected solvable Lie group of type $E$ and $K$ a closed subgroup with corresponding homogeneous space $X=K \backslash G$. 
If $\sigma$ is an unoriented continuous $G$-Kakeya configuration in $X$ which lifts to one in $G$, then $|\sigma| = X$. 
The same conclusions hold for oriented configurations when the dimension of $G$ is odd.
\end{cor}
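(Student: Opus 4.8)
The plan is to reduce this statement directly to Theorem~\ref{thm: nilpotent} (and its dimension-2 extension recorded in the preceding corollary) via the lifting hypothesis. Suppose $\sigma: P(\mathfrak{g}) \to X = K\backslash G$ is an unoriented continuous $G$-Kakeya configuration that lifts, so there is a continuous unoriented Kakeya configuration $\hat{\sigma}: P(\mathfrak{g}) \to G$ with $\pi \circ \hat{\sigma} = \sigma$, where $\pi: G \to K\backslash G$ is the quotient map. Since $G$ is a connected solvable Lie group of type $E$, the corollary at the end of the previous section gives $|\hat{\sigma}| = G$. The key bookkeeping step is then the identity $|\sigma| = \pi(|\hat{\sigma}|)$, which is noted in the definition of a configuration that lifts: indeed $\pi$ intertwines the left-translation/one-parameter-subgroup structure in $G$ with the right $G$-action on $X$, so $\pi(\hat{\sigma}(L)\star\exp(L)) = \pi(\hat{\sigma}(L))\star\exp(L) = \sigma(L)\star\exp(L)$ for each $L \in P(\mathfrak{g})$, and taking the union over $L$ yields $\pi(|\hat{\sigma}|) = |\sigma|$. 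Combining, $|\sigma| = \pi(|\hat{\sigma}|) = \pi(G) = X$, since $\pi$ is surjective.

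For the oriented case the argument is identical once we additionally assume $\dim G$ is odd: the lift $\hat{\sigma}: S(\mathfrak{g}) \to G$ exists by hypothesis, the oriented half of the corollary (again valid for connected solvable Lie groups of type $E$ in odd dimension) gives $|\hat{\sigma}| = G$, and the same intertwining identity $|\sigma| = \pi(|\hat{\sigma}|)$ applies verbatim with $S(\mathfrak{g})$ in place of $P(\mathfrak{g})$. Hence $|\sigma| = \pi(G) = X$.

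The only real content to check is the intertwining identity $\pi(\hat{\sigma}(L)\star\exp(L)) = \sigma(L)\star\exp(L)$, i.e.\ that the quotient map sends a left coset $g\exp(L)$ in $G$ to the orbit $\pi(g)\star\exp(L)$ under the right $G$-action on $K\backslash G$. This is immediate from the definition of the right action of $G$ on $K\backslash G$ by $(Kg)\star g' = Kgg'$: writing $\hat{\sigma}(L) = g$, the set $\pi(g\exp(L)) = \{ K g \exp(tL) : t \in \mathbb{R}\} = \{ (Kg)\star \exp(tL) : t \in \mathbb{R}\} = \pi(g)\star\exp(L) = \sigma(L)\star\exp(L)$. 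So no step is a genuine obstacle here; the difficulty was entirely absorbed into the earlier corollary on solvable Lie groups of type $E$, and this statement is a clean corollary of it together with the surjectivity of $\pi$. (The hypothesis ``lifts to one in $G$'' is exactly what is needed to invoke that corollary — without it, as the discussion in Section~\ref{section: homogeneous} indicates, one would be facing an obstruction living in $\pi_{n-1}(X)$ versus $\pi_{n-1}(G)$.)

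\begin{proof}
Suppose $\sigma: P(\mathfrak{g}) \to X = K\backslash G$ is an unoriented continuous $G$-Kakeya configuration which lifts, and let $\hat{\sigma}: P(\mathfrak{g}) \to G$ be a continuous unoriented Kakeya configuration with $\pi \circ \hat{\sigma} = \sigma$, where $\pi: G \to K\backslash G$ is the quotient map. Since $G$ is a connected solvable Lie group of type $E$, the previous corollary gives $|\hat{\sigma}| = G$. As noted when configurations that lift were defined, $|\sigma| = \pi(|\hat{\sigma}|)$: for each $L \in P(\mathfrak{g})$, writing $g = \hat{\sigma}(L)$, the coset $g\exp(L)$ maps under $\pi$ to $\{ Kg\exp(tL) : t \in \mathbb{R}\} = \{ (Kg)\star\exp(tL) : t \in \mathbb{R}\} = \pi(g)\star\exp(L) = \sigma(L)\star\exp(L)$, and taking the union over all $L$ yields $\pi(|\hat{\sigma}|) = |\sigma|$. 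Therefore $|\sigma| = \pi(|\hat{\sigma}|) = \pi(G) = X$, since $\pi$ is surjective.

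In the oriented case, assume in addition that $\dim G$ is odd. The lift $\hat{\sigma}: S(\mathfrak{g}) \to G$ exists by hypothesis, and the oriented half of the previous corollary gives $|\hat{\sigma}| = G$. The identity $|\sigma| = \pi(|\hat{\sigma}|)$ holds by the same computation with $S(\mathfrak{g})$ in place of $P(\mathfrak{g})$, so $|\sigma| = \pi(G) = X$.
\end{proof}
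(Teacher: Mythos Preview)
Your proof is correct and follows exactly the approach the paper intends: lift $\sigma$ to $\hat{\sigma}$ in $G$, apply the earlier corollary that $|\hat{\sigma}|=G$ for connected solvable Lie groups of type $E$, and then use the identity $|\sigma|=\pi(|\hat{\sigma}|)$ together with surjectivity of $\pi$. The paper in fact states this corollary without a separate proof, presenting it as an immediate consequence of the preceding results, so your write-up simply makes explicit what the paper leaves implicit.
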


Let us look at one simple example to illustrate working with Kakeya configurations in general homogeneous spaces. 
Let $G=SO(3)$, then $G$ is a compact connected Lie group which is homeomorphic to $\mathbb{R}P^3$ as a space. 
The Lie algebra $\mathfrak{so}(3)$ is isomorphic to the Lie algebra given by $\mathbb{R}^3$ with the cross-product as bracket. 
Given a nonzero vector $\hat{v}$ in the Lie algebra, the exponential flow generates the rotation about the axis given by $\hat{v}$ in the counter 
clockwise direction (right hand rule). Thus the one-parameter subgroup of $SO(3)$ corresponding to any line in $\mathbb{R}^3$ is just 
the set of rotations about that line. 

Now consider the homogeneous space $SO(2) \backslash SO(3)$ which is diffeomorphic to the standard sphere $S^2 \subseteq \mathbb{R}^3$.
Under the quotient map, the translates of one-parameter subgroups map to closed geodesics (great circles or constant curves). 
A continuous oriented $SO(3)$-Kakeya configuration in $S^2$ is then a map $\sigma: S(\mathbb{R}^3) \to S^2$ which represents a continuously 
varying family of these curves in $S^2$ which cover all the "directions" in $SO(3)$. (Thus there can be redundancy in the directions in $S^2$).\\

(To avoid such redundancy in general, a choice of identification of the tangent space of the homogeneous space with a subspace of the tangent 
space of the Lie group $G$ can be chosen and only the corresponding subspace of the projective space of $\mathfrak{g}$ used. However 
this requires a choice of horizontal lift in the fiber bundle, i.e., a choice of connection for the principal $K$-bundle. We will not pursue this topic here.)

Since $\pi_2(SO(3))=0$, an oriented $SO(3)$-Kakeya configuration in $S^2$, $\sigma: S(\mathbb{R}^3)=S^2 \to S^2$ lifts to an oriented Kakeya configuration in $SO(3)$ if and only if it has degree $0$ as a map, i.e., is homotopic to a constant.

\section{Acknowledgements}
We would like to thank Michael Gage and Alex Iosevich for many useful discussions.

\bigskip

\noindent
Jonathan Pakianathan (corresponding author) \\
Dept. of Mathematics \\
University of Rochester, \\
Rochester, NY 14627 U.S.A. \\
E-mail address: jonpak@math.rochester.edu \\

\bigskip

\noindent
Brendan Murphy \\
Dept. of Mathematics \\
University of Rochester, \\
Rochester, NY 14627 U.S.A. \\
E-mail address: murphy@math.rochester.edu \\

\end{document}